\documentclass[11pt]{article}

\usepackage[utf8]{inputenc}
\usepackage{graphicx, amsmath, amsthm, amsfonts, amssymb, subfigure, amstext}
\usepackage{wrapfig, amsopn, latexsym,  epsfig, multicol, multirow,verbatim}
\usepackage{color, comment,  caption,  bm}
\usepackage{mathbbol}
\usepackage[numbers]{natbib}
\usepackage{geometry}
\geometry{hmargin={3cm,3cm},vmargin={3cm,3.5cm}}
\usepackage{cmbright}

\newtheorem{Theorem}{Theorem}[section]
\newtheorem{Remark}{Remark}[section]
\newtheorem{Definition}{Definition}[section]

\newtheorem{prop}{Proposition}[section]
\newcommand{\comments}[1]{}

\title{Flexible models for overdispersed and underdispersed count data}
\author{Dexter Cahoy\thanks{Department  of Mathematics and Statistics,  University of Houston-Downtown, USA, email: {\tt cahoyd@uhd.edu}} \and Elvira Di Nardo\thanks{Department of Mathematics ``G.~Peano'',  University  of Torino,  Italy, emails: \texttt{elvira.dinardo@unito.it, federico.polito@unito.it}} \and Federico Polito\footnotemark[2]}

\date{}

\begin{document}

\maketitle

\begin{abstract}
\indent

Within the framework of probability models for overdispersed count data, we propose the generalized fractional Poisson distribution (gfPd), which is a natural generalization of the fractional Poisson distribution (fPd), and the  standard Poisson distribution. We derive some properties of gfPd and more specifically we study moments, limiting behavior and other features of fPd. The skewness  suggests that  fPd  can be  left-skewed, right-skewed or symmetric; this makes the model flexible  and appealing in practice. We apply the model to real big count data and estimate the model parameters using maximum likelihood.
Then, we turn to  the very general class of weighted Poisson distributions (WPD's) to allow both overdispersion and underdispersion. Similarly to Kemp's generalized hypergeometric probability distribution, which is based on hypergeometric functions, we analyze a class of WPD's related to a generalization of Mittag--Leffler functions. The proposed class of distributions includes  the well-known COM-Poisson and the hyper-Poisson models. We characterize conditions on the parameters allowing for overdispersion and underdispersion,  and analyze two  special cases of interest which have not yet appeared in the literature.

\vspace{0.1in}

\noindent \textbf{Keywords}: Left-skewed, Big count data, underdispersion, overdispersion,  COM-Poisson, Hyper-Poisson, Weighted Poisson, Fractional Poisson distribution
\end{abstract}

\section{Introduction and mathematical background}

The negative binomial distribution is one of the most widely used  discrete probability models that  allow departure from the mean-equal-variance Poisson model. More specifically, the negative binomial distribution models overdispersion of data relative to the Poisson
distribution.  For clarity, we refer to  the extended negative binomial distribution with  probability mass function  
\begin{equation}
P(X=x) = \frac{\Gamma (r + x) }{\Gamma (r) x!}  p^r(1-p)^x, \qquad x=0, 1, 2, \ldots,
\end{equation}
where $r>0$. If $r \in \{1,2,\dots\}$,  $x$ is the number of  failures which occur in a sequence of independent Bernoulli trials to obtain $r$ successes, and $p$ is the success probability of  each trial.

One limitation of the negative binomial distribution in fitting overdispersed count data is that the skewness and kurtosis are always positive. An example is given in Section \ref{simpar}, in which we introduce two real world data sets that do not fit a negative binomial  model.  The data sets reflect reported incidents of crime that occurred in the city of Chicago from  January 1, 2001 to May 21, 2018. These data sets are overdispersed but the skewness coefficients are estimated to be respectively -0.758 and -0.996. Undoubtedly, the negative binomial model is expected to underperform in these types of count populations. 
These data sets are just two examples in a yet to be discovered  non-negative binomial world, thus demonstrating the real need for a more flexible alternative for overdispersed count data. The literature on alternative probabilistic models for overdispersed count data is vast. A history of the overdispersed  data problem and related literature can be found in \cite{smkbp05}. In this paper  we consider the fractional Poisson distribution (fPd) as an  alternative. The fPd arises naturally from  the widely studied fractional Poisson process \citep{saz97, ras00,gju01, las03,  bno09,  cuw10, mnv11}.  It has not yet been studied in depth and  has not been applied to model real count data. We show that the fPd allows big (large mean), both left- and right-skewed overdispersed count data making it attractive for practical settings, especially now that data are becoming more available and bigger than before. fPd's  usually involve one parameter; generalizations to two parameters are proposed in \cite{bno09,Herrmann}. Here, we take a step forward and further generalize the fPd to a three parameter model, proving the resulting distribution is still overdispersed.

One of the most popular measures to detect the departures from the Poisson distribution is the so-called Fisher index which is the ratio of the variance to the mean $(\lessgtr 1)$ of the count distribution. As shown in the crime example of Section \ref{simpar}, the computation of the Fisher index is not sufficient to determine a first fitting assessment of the model, which indeed should take into account at least the presence of negative/positive skewness. To compute all these measures, the first three factorial moments should be considered.  Consider a discrete random variable $X$ with probability generating function (pgf)
\begin{equation}
\label{forma}
 G_X(u) =   \bm{E} u^X  = \sum_{k\geq 0} a_k \frac{(u-1)^k}{k!}, \qquad |u|\le 1, 
\end{equation}
where $\{a_k\}$ is a sequence of real numbers such that $a_0=1.$
Observe that $Q(t)=G_X(1+t)$ is the factorial moment generating function of $X.$ The $k$-th moment  is
\begin{equation}
\bm{E} X^k = \sum_{r=1}^k S(k,r) a_r,
\label{momfac} 
\end{equation}
where $S(k,r)$ are the Stirling numbers of the second kind \cite{DiNardo}. By means of the factorial moments it is straightforward to characterize overdispersion or underdispersion as follows:  letting 
$a_2 > a_1^2$ yields overdispersion whereas $a_2 < a_1^2$ gives underdispersion.
Let $c_2$ and $c_3$ be the second and third cumulant of $X$, respectively. Then,
the skewness can be expressed as
\begin{align}
    \gamma(X) = \frac{c_3}{c_2^{3/2}} =
    \frac{a_3+3a_2+a_1[1-3 a_2 + a_1(2a_1-3)]}{(a_1+a_2-a_1^2)^{3/2}}.
    \label{skew}
\end{align}
If the condition
\begin{equation}
\lim_{n \rightarrow \infty}
\frac{a_n}{(n-k)!} = 0, \qquad k \leq n,
\label{condition}
\end{equation}
is fulfilled, the  probability  mass  function   of $X$ can be written in terms of its factorial moments \cite{daley}:
\begin{equation}
P(X=x) = \frac{1}{x!} \sum_{k\geq 0} a_{k+x} \frac{(-1)^k}{k!}, \qquad x \geq 0.
\label{mf1}
\end{equation}

As an example, the very well-known generalized Poisson distribution which accounts for both under and overdispersion \cite{macedo, consul}, put in the above form has factorial moments given by $a_0=1$ and
\begin{align}
    a_k = \sum_{r = 0}^{h(\lambda_2)} \frac{1}{r!} \lambda_1 (\lambda_1 +\lambda_2(r+k))^{r+k-1} e^{-(\lambda_1+\lambda_2(r+k))}, \qquad \lambda_1 > 0,
\end{align}
where $h(\lambda_2) = \infty$ and $k =1,2,\ldots$, if $\lambda_2>0$. While $h(\lambda_2) = M-k$ and $k=1, \ldots, M$, 
if $\max(-1, -\lambda_1/M) \le \lambda_2 < 0$ and $M$ is the largest positive integer for which $\lambda_1+M\lambda_2 > 0$.

Another example is given by the Kemp family of generalized hypergeometric factorial moments distributions (GHFD) \cite{kak74} for which the factorial moments are given by
\begin{equation}
a_k = \frac{\Gamma \left[ (a+k);(b+k) \right] \lambda^k }{ \Gamma \left[  (a); (b) \right]}, \qquad k \geq 0, \label{B1}
\end{equation}
where $ \Gamma \left[  (a); (b) \right] =\prod_{i=1}^p \Gamma (a_i)/\prod_{j=1}^q \Gamma (b_j)$,
with $a_1, \ldots, a_p, b_1, \ldots, b_q \in {\mathbb R}$ and $p,q$ non negative integers. The factorial moment generating function is $Q(t) = \, _pF_q \left[ (a) ;(b); \lambda t \right]$, where
\begin{equation}
_pF_q \left[ (a) ;(b); z \right]=  \, _pF_q(a_1,\ldots, a_p; b_1,\ldots,b_q; z) = \sum_{m\geq 0} \frac{(a_1)_m \cdots (a_p)_m}{(b_1)_m \cdots (b_q)_m} \frac{z^m}{m!},
\label{hyper}
\end{equation}
and $(a)_m=a(a+1)\cdots(a+m-1), m \geq 1.$ Both overdispersion and underdispersion are possible, 
depending on the values of the parameters \cite{trip1979}.
The generalized fractional Poisson distribution (gfPd), which we introduce in the next section, lies in the same class of the Kemp's GHFD but with the hypergeometric function in \eqref{hyper} substituted by a generalized Mittag--Leffler function (also known as three-parameter Mittag--Leffler function or Prabhakar function). In this case, as we have anticipated above, the model is capable of not only describing overdispersion but also having a degree of flexibility in dealing with skewness.

It is worthy to note that there exists a second family of Kemp's distributions, still based on hypergeometric functions and still allowing both underdispersion and overdispersion. This is known the Kemp's generalized hypergeometric probability distribution (GHPD) \cite{kemp1968} and it is actually a special case of the very general class of weighted Poisson distributions (WPD).
Taking into account the above features, we thus analyze the whole class of WPD's with respect to the possibility of obtaining under and overdispersion.
In Theorem \ref{rop} we first give a general necessary and sufficient condition to have an underdispersed or an overdispersed WPD random variable in the case in which the weight function may depend on the underlying Poisson parameter $\lambda$. Special cases of WPD's admitting a small number of parameters have already proven to be of practical interest, such as for instance the well-known COM-Poisson \cite{COMPOISS} or the hyper-Poisson \cite{CB1964} models.
Here we present a novel WPD family related to a generalization of Mittag-Leffler functions in which
the weight function is based on 
a ratio of gamma functions. The proposed distribution family includes the above-mentioned well-known classical cases. We characterize conditions on the parameters allowing overdispersion and underdispersion  and analyze two further special cases
of interest which have not yet appeared in the literature. We derive recursions to generate probability mass functions (and thus random numbers) and show how to approximate the mean and the variance.

The paper is organized as follows: in Section \ref{asez}, we introduce the generalized fractional Poisson distribution, discuss some properties and recover the classical fPd as a special case. These models are fit to  the two real-world data sets mentioned above. Section \ref{wpdd} is devoted to weighted Poisson distributions, their characteristic  factorial moments and the related conditions to obtain overdispersion and underdispersion. Furthermore, the novel WPD based on a generalization of Mittag--Leffler functions  is introduced and described in Section \ref{further}:  we discuss some properties and show how to get exact formulae for factorial moments by using Fa\`a di Bruno's formula \cite{Stanley}. Two special models are then characterized depending on the values of the parameters and compared to classical models. Finally, some illustrative plots end the paper.

\section{Generalized fractional Poisson distribution (gfPd)}\label{asez}

\begin{Definition}
A random variable  $X_{\alpha, \beta}^\delta \; \stackrel{d}{=} \; $gfPd$(\alpha, \beta, \delta, \mu)$ if
\begin{equation}
P(X_{\alpha, \beta}^\delta = x) =  \frac{   \Gamma (\delta + x)}{x! \Gamma (\delta)} \mu^x \Gamma (\beta) E_{\alpha,\alpha x +\beta}^{\delta  +x} (-\mu), \quad \mu >0;\, x \in \mathbb{N}; \, \alpha, \beta \in (0, \;1]; \, \delta \in  (0,  \;  \beta/\alpha],
\label{ciccia}
\end{equation}
where
\begin{equation}
\label{igo}
	E_{\eta,\nu}^\tau( w) = \sum_{j=0}^\infty \frac{(\tau)_j}{j!\Gamma(\eta j+\nu)} \;  w^j,
\end{equation}
$w  \in \mathbb{C};  \Re(\eta),\Re(\nu),\Re(\tau) >0$, is the generalized Mittag--Leffler function \citep{pra71} and  $(\tau)_j = \Gamma ( \tau + j  )/  \Gamma ( \tau )  $ denotes the Pochhammer symbol. 
\end{Definition}
To show non-negativity, notice that 
\begin{equation}
  \frac{  \Gamma (\delta + x)}{ \Gamma (\delta)}   E_{\alpha,\alpha x +\beta}^{\delta  + x} (-\mu) \geq 0 \iff    (-1)^x \frac{ d^x}{d \mu^x}   E_{\alpha, \beta}^{\delta} (-\mu) \geq 0,
\end{equation}
that is, $ E_{\alpha, \beta}^{\delta} (-\mu)$ is completely monotone. From \cite{deol}, it is known that  $ E_{\alpha, \beta}^{\delta} (-\mu)$ is completely monotone if  $\alpha, \beta \in (0, \;1], \delta \in  (0,  \;  \beta/\alpha]$ and thus the pmf in (\ref{ciccia}) is non-negative.

Note that the probability mass function can be determined using the following integral representation \citep{poltom}:
\begin{equation}
   P(X_{\alpha, \beta}^\delta = x) = \frac{ \Gamma(\beta) }{x! \Gamma(\delta)} \mu^x \int_{\mathbb{R}^+}e^{- \mu    y} y^{\delta+x -1}  \phi(-{\alpha},
   \beta- \alpha \delta; -y) dy,  \label{form1bis}
\end{equation}
where  the Wright function $\phi$  is defined as the convergent sum \citep{kilbas} 
\begin{equation}
    \phi(\xi,\omega; z)  =\sum\limits_{r=0}^\infty \frac{ z^r}{r! \Gamma [\xi r + \omega ]}, \qquad \xi > -1, \omega,z \in {\mathbb R}.
\end{equation}
\begin{Remark}\label{momf}
The random variable $X_{\alpha,\beta}^\delta$ has factorial moments
\begin{equation}
a_{k} =  \frac{\Gamma(\beta) \Gamma(\delta + k)}{\Gamma (\alpha k + \beta) \Gamma(\delta)} \mu^{k}, \qquad k \geq 0.
\label{factgfPd}
\end{equation}
Hence the pgf is $G_{X_{\alpha, \beta}^\delta}(u)= \Gamma (\beta)  E_{\alpha, \beta}^{\delta} ( \mu (u-1) )$, $|u| \le 1$.
\end{Remark}

By expressing the moments in terms of factorial moments and after some algebra we obtain 
\begin{align}
\bm{E} [X_{\alpha, \beta}^\delta]  & = \frac{ \Gamma ( \beta ) \delta  \mu}{ \Gamma ( \beta + \alpha)}, \\
\bm{V}\text{\textbf{ar}}[ X_{\alpha, \beta}^\delta]  &=  \frac{ \Gamma ( \beta ) \delta  \mu}{ \Gamma ( \beta + \alpha)} + \Gamma ( \beta ) \delta \mu^2 \left( \frac{  (\delta +1) }{ \Gamma ( \beta + 2\alpha)}  -  \frac{ \Gamma ( \beta ) \delta }{ \Gamma ( \beta + \alpha)^2}  \right).
\end{align}
\begin{Theorem}
$X_{\alpha, \beta}^\delta$ exhibits overdispersion. 
\end{Theorem}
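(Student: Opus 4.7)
The plan is to verify the overdispersion criterion $a_2>a_1^2$ (stated just below \eqref{momfac}) by means of the closed-form factorial moments given in Remark \ref{momf}. Evaluating \eqref{factgfPd} at $k=1,2$ yields $a_1=\Gamma(\beta)\delta\mu/\Gamma(\alpha+\beta)$ and $a_2=\Gamma(\beta)\delta(\delta+1)\mu^2/\Gamma(2\alpha+\beta)$, and after cancelling the common positive factor $\delta\mu^2\Gamma(\beta)$ the inequality $a_2>a_1^2$ is equivalent to
\[
(\delta+1)\,\Gamma(\alpha+\beta)^2 \;>\; \delta\,\Gamma(\beta)\,\Gamma(2\alpha+\beta).
\]

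The next step is to exploit the definitional constraint $\delta\le\beta/\alpha$ to eliminate $\delta$. Indeed, this constraint is equivalent to $(\delta+1)/\delta\ge(\alpha+\beta)/\beta$, so it suffices to establish the parameter-free inequality
\[
\frac{\alpha+\beta}{\beta}\,\Gamma(\alpha+\beta)^2 \;\ge\; \Gamma(\beta)\,\Gamma(2\alpha+\beta).
\]
Multiplying both sides by $\beta$ and applying the functional equation $x\Gamma(x)=\Gamma(x+1)$ recasts this as
\[
\frac{\Gamma(\alpha+\beta+1)}{\Gamma(\beta+1)} \;\ge\; \frac{\Gamma(2\alpha+\beta)}{\Gamma(\alpha+\beta)}.
\]

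The last inequality follows from the log-convexity of the Gamma function, which implies that $g(x)=\Gamma(x+\alpha)/\Gamma(x)$ is strictly increasing on $(0,\infty)$ for each fixed $\alpha>0$; since the remaining constraint $\alpha\in(0,1]$ forces $\beta+1\ge\alpha+\beta$, monotonicity of $g$ yields $g(\beta+1)\ge g(\alpha+\beta)$, which is exactly what is needed. The main obstacle is arranging the algebra so that each of the two parameter bounds ($\delta\le\beta/\alpha$ and $\alpha\le 1$) is invoked precisely where it is needed; once that is done, the only simultaneous degeneracy occurs at $\alpha=1$ together with $\delta=\beta/\alpha$, where the pgf $\Gamma(\beta)E_{1,\beta}^{\beta}(\mu(u-1))$ collapses to $\exp(\mu(u-1))$, recovering the standard Poisson in which variance equals the mean.
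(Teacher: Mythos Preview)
Your proof is correct and follows essentially the same route as the paper's: both reduce $a_2>a_1^2$, via the constraint $\delta\le\beta/\alpha$, to the Gamma-ratio inequality $\Gamma(\alpha+\beta+1)/\Gamma(\beta+1)\ge\Gamma(2\alpha+\beta)/\Gamma(\alpha+\beta)$ (the paper writes it as $\beta\,\text{Beta}(\beta,\alpha)<(\alpha+\beta)\,\text{Beta}(\alpha+\beta,\alpha)$, which is the same thing), and then verify it through the monotonicity of $x\mapsto\Gamma(x+\alpha)/\Gamma(x)$---you via log-convexity of $\Gamma$, the paper via the increasing digamma, which are equivalent. Your explicit identification of the sole degenerate boundary $\alpha=1$, $\delta=\beta/\alpha$ (where the model collapses to Poisson and $a_2=a_1^2$) is a clarification the paper leaves implicit.
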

\begin{proof}
We have 
\begin{align}
a_2 > a_1^2 \Leftrightarrow  \frac{\delta +1}{\Gamma (2 \alpha +\beta)} > \frac{\delta \Gamma (\beta)}{\Gamma^2 ( \alpha +\beta)}  
\Leftrightarrow \delta \bigg( \frac{\Gamma(\beta)}{\Gamma^2(\alpha+\beta)} - \frac{1}{\Gamma(2 \alpha+\beta)} \bigg) < \frac{1}{\Gamma(2 \alpha + \beta)}
\end{align}
and
\begin{align}
\frac{\Gamma(\beta)}{\Gamma^2(\alpha+\beta)} - \frac{1}{\Gamma(2 \alpha+\beta)} > 0 \quad {\rm as}
\quad \text{Beta}(  \beta, \alpha) > \text{Beta}(\alpha+\beta, \alpha).
\end{align}

Thus, the distribution is overdispersed for
\begin{equation}
\delta < \frac{\text{Beta}(  \alpha + \beta, \alpha)}{\text{Beta}(  \beta, \alpha) - \text{Beta}(  \alpha + \beta, \alpha)}.
\label{(bound)}
\end{equation}
Observe that the function $\beta \text{Beta}(\beta,\alpha)$ is increasing in $\beta$ for $\alpha, \beta \in (0,1)$ 
as 
\begin{equation}
\frac{\partial}{\partial \beta} \beta \text{Beta}(\beta,\alpha) = \text{Beta}(\beta,\alpha) (1 + \beta (\psi(\beta)-\psi(\alpha+\beta))>0,
\label{(deriv)}
\end{equation}
where $\psi$ is the digamma function. Note that \eqref{(deriv)} is positive by formula (1.3.3) of \cite{leb72} as $\psi$ is increasing on $(0,\infty)$. 
Thus
\begin{equation}
\beta \text{Beta}(  \beta, \alpha) <  (\alpha + \beta)  \text{Beta}(  \alpha + \beta, \alpha)\Leftrightarrow  \frac{\beta}{\alpha} < \frac{\text{Beta}(  \alpha + \beta, \alpha)}{\text{Beta}(  \beta, \alpha) - \text{Beta}(  \alpha + \beta, \alpha)}
\label{(bound1)}
\end{equation}
and for $\delta \in (0, \beta/\alpha)$    the  bound \eqref{(bound)} is always verified. 
\end{proof}

\subsection{Fractional Poisson distribution}

This section analyzes the classical fPd, which is a special case of gfPd, and is obtained when $\beta=\delta=1$. The fPd can model  asymmetric (both left-skewed and right-skewed) overdispersed count data for all mean  count values (small and large).    The fPd  has probability mass function (pmf)    
\begin{equation}
P(X_\alpha=x) =  \mu^x E_{\alpha,\alpha x +1}^{x +1} ( -\mu) , \qquad x=0,1,2,\ldots, 
\label{mf2}
\end{equation}
where $\mu >0$, $\alpha \in [0,1]$.

Notice that if  $\alpha = 1$, the standard  Poisson  distribution is retrieved, while for $\alpha=0$ we have 
$X_{0} \stackrel{d}{=} \text{Geo} \left( 1/(1+ \mu)\right)$. Indeed,
\begin{align}
P(X_0=x) =  \frac{\mu^x}{x!}\sum_{j=0}^{\infty}\frac{(j+x)!}{j!}(-\mu)^j = \frac{1}{1+\mu}\left(\frac{\mu}{1+\mu}\right)^x, \qquad x \ge 0.
\end{align}

Furthermore, the probability mass function can be determined using the following integral representation \citep{bao10}:
\begin{equation}
    P(X_\alpha=x) = \frac{ \mu^x }{x!}\int_{\mathbb{R}^+}e^{- \mu    y} y^x  M_{\alpha} (y) dy,  \label{form1}  
\end{equation}
where  the $M$-Wright  function \citep{mmp10} 
\begin{equation}
    M_\alpha (y) =\sum\limits_{j=0}^\infty \frac{ (-y)^j}{j! \Gamma [-\alpha j + (1-\alpha) ]}  =  \frac{1}{\pi} \sum\limits_{j=1}^\infty \frac{ (-y)^{j-1}}{(j-1)!} \Gamma (\alpha j) \sin ( \pi \alpha j)
\end{equation}
is the probability density function of the
random variable $S^{-\alpha}$  with $S \stackrel{d}{=} \alpha^+$-stable supported in $\mathbb{R}^+.$ By using (\ref{form1}), 
the  cumulative distribution function turns out to be
\begin{equation}
    F_{X_\alpha} (x)= \sum_{r=0}^\infty  \binom{x + r-1}{x} \frac{(-1)^r \mu^{-(r+1)}}{\Gamma (1 - \alpha (r+1))} {\mathbb 1}_{(x > 0)}(x).
\end{equation}

\begin{Remark}
From \eqref{momf}, the random variable $X_\alpha$ has factorial moments
\begin{equation}
a_{k} = \frac{\mu^{k} k!}{\Gamma (1 
+ \alpha k)}, \qquad k \geq 0.
\label{factfPd}
\end{equation}
Hence the probability generating function is $G_{X_\alpha}(u)= E_{\alpha,1}^1 \left( \mu \left( u-1\right)\right)$, $|u| \le 1$.

\end{Remark}

With respect to the symmetry structure of $X_\alpha$, from \eqref{skew} and \eqref{factfPd}, the skewness of $X_\alpha$ reads 
\begin{equation}
\gamma(X_{\alpha})= \frac{\frac{1}{\mu^2 \Gamma (1 + \alpha) } + \frac{6}{\mu \Gamma (1 + 2\alpha) } + \frac{6}{\Gamma (1 + 3 \alpha) } -\frac{3}{\mu \left[\Gamma (1 + \alpha)\right]^2 } -\frac{6}{\Gamma (1 + \alpha) \Gamma (1 + 2 \alpha) }  + \frac{2}{[\Gamma (1 + \alpha) ]^3}}{ \left(\frac{1}{\mu \Gamma (1 + \alpha) } + \frac{2}{\Gamma (1 + 2\alpha) } - \frac{1}{ [ \Gamma (1 + \alpha)]^2} \right)^{3/2}  }.  
\end{equation}
Moreover,
\begin{equation}
\lim_{\mu \to \infty}  \gamma(X_{\alpha}) = \frac{  \frac{6}{\Gamma (1 + 3 \alpha) } - \frac{6}{\Gamma (1 + \alpha) \Gamma (1 + 2 \alpha) }  + \frac{2}{\left[\Gamma (1 + \alpha) \right]^3}}{ \left(  \frac{2}{\Gamma (1 + 2\alpha) } - \frac{1}{ [ \Gamma (1 + \alpha)]^2} \right)^{3/2}}  \neq 0, 
\label{skew1}
\end{equation}
which correctly vanishes if $\alpha=1$, like the ordinary Poisson distribution.

\subsubsection{Simulation and parameter estimation}\label{simpar}

The  integral representation  (\ref{form1}) allows  visualization of the probability mass function of $X_\alpha$ (see Figure \ref{Fig1}).
Figure \ref{Fig1}  shows the flexibility of the fPd. The probability distribution ranges from zero-inflated  right-skewed ($\alpha \to 0$) to left-skewed ($\alpha \to 1$)  and  symmetric  ($\alpha = 1$) overdispersed count data.     
\begin{figure}[h!t!b!p!]
\centering
\includegraphics[scale=.45]{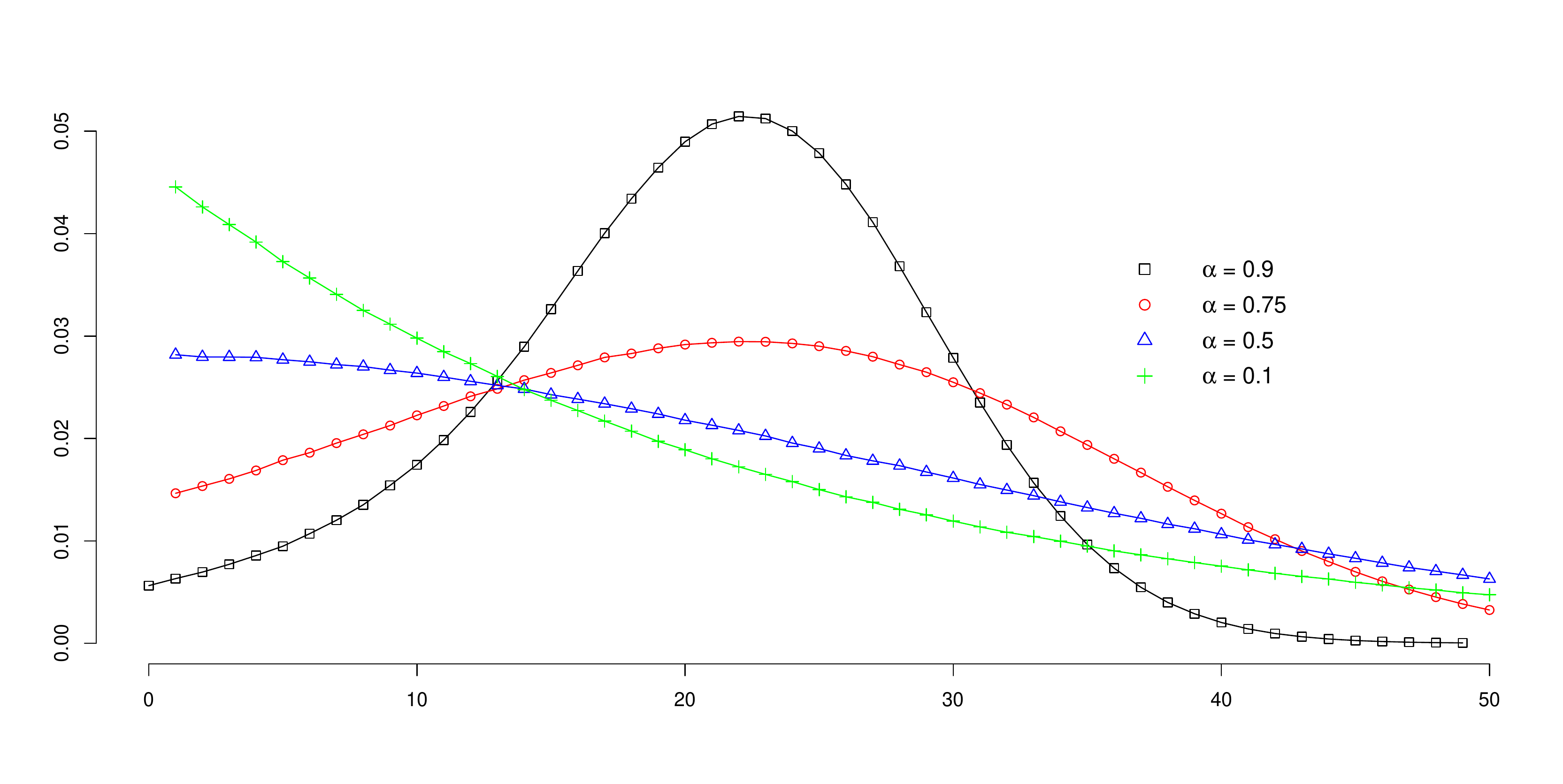}
\caption{Probability mass functions of $X_\alpha$ for $\alpha =0.1, 0.5, 0.75, 0.9$, and $\mu = 20$.}
\label{Fig1}
\end{figure}
To compute the integral in (\ref{form1}) by means of Monte Carlo techniques,  we use the approximation, 
\begin{equation}
p_x^{\alpha}  \approx  \frac{\mu^x}{\; x!} 
\left(\frac{1}{N} \sum_{j=1}^N e^{- \mu Y_j} Y_j^x \right),
\end{equation}
where   $Y_j's \; \stackrel{iid}{=}   S^{-\alpha}.$  Note that  the random variable $S$ can be generated using the following formula \citep{kan75, cms76}:
\begin{equation}
S \stackrel{d}{=}  \frac{\sin(\alpha \pi U_1)[ \sin((1-\alpha)\pi
U_1)]^{1/\alpha-1}}{[\sin (\pi U_1)]^{1/\alpha}|\ln
U_2|^{1/\alpha-1}}, 
\end{equation}
where $U_1$  and $U_2$  are independently and uniformly distributed in $[0,1]$.  Thus, fractional Poisson random numbers can be generated using the algorithm below.

\begin{quote}
\textbf{Algorithm:}

\textbf{Step 1.} Set $X=0,$ and $T=0.$

\textbf{Step 2.}  While $\lbrace T\leq 1 \rbrace$
\begin{eqnarray*}  
T &=& T+  V^{1/\alpha} \, S   \\
X &=& \text{ifelse}(T \leq1,  X+1, X) 
\end{eqnarray*}

\textbf{Step 3.} Repeat steps $1-2, n$ times. 
\end{quote}
 
Note that the random variable $V$ follows the exponential distribution with density function $\mu \exp (-\mu v), v \geq 0.$ Algorithms for generating random variables from the exponential density function are well-known.    Hence, the algorithm allows estimation of the $k$th moment, i.e.,  $\bm{E} X_\alpha^{k}.$

Figure \ref{Fig2} shows the plot of the  skewness coefficient \eqref{skew1} as a function of $\mu$ and $\alpha.$  Unlike the negative binomial, the fPd can accommodate both left-skewed and right-skewed count data making it more flexible. 
\begin{figure}[h!t!b!p!]
\centering
\includegraphics[scale=.4]{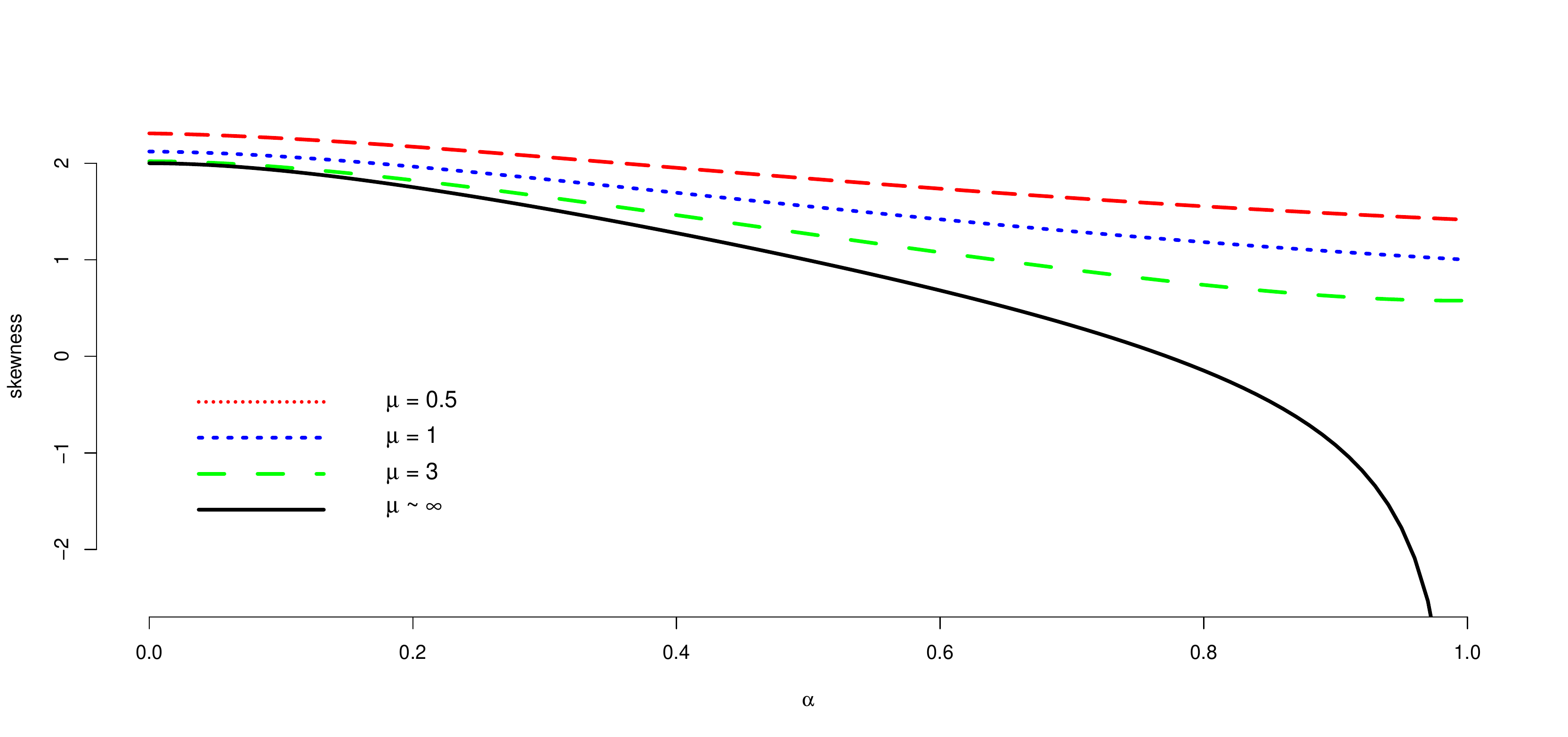}
\caption{Skewness coefficient for $\mu = 0.5,1,3$ and its limit as functions of $\alpha \in (0,1).$}
\label{Fig2}
\end{figure}
Thus, the fPd is more flexible than the negative binomial, especially if the number of failures  becomes large.

We applied the fractional Poisson model $\text{fPd}(\alpha, \mu)$  to two data sets, named
Data $1$ and Data $2,$ which are about 
the reported incidents of crime that occurred in the city of Chicago from 2001 to present\footnote{\texttt{https://data.cityofchicago.org/Public-Safety/Crimes-2001-to-present/ijzp-q8t2/data}}. The  sample distributions together with their description are shown in Figure \ref{fig3}. 
 
\begin{figure}[h!t!b!p!]
\centering
\includegraphics[scale=.45]{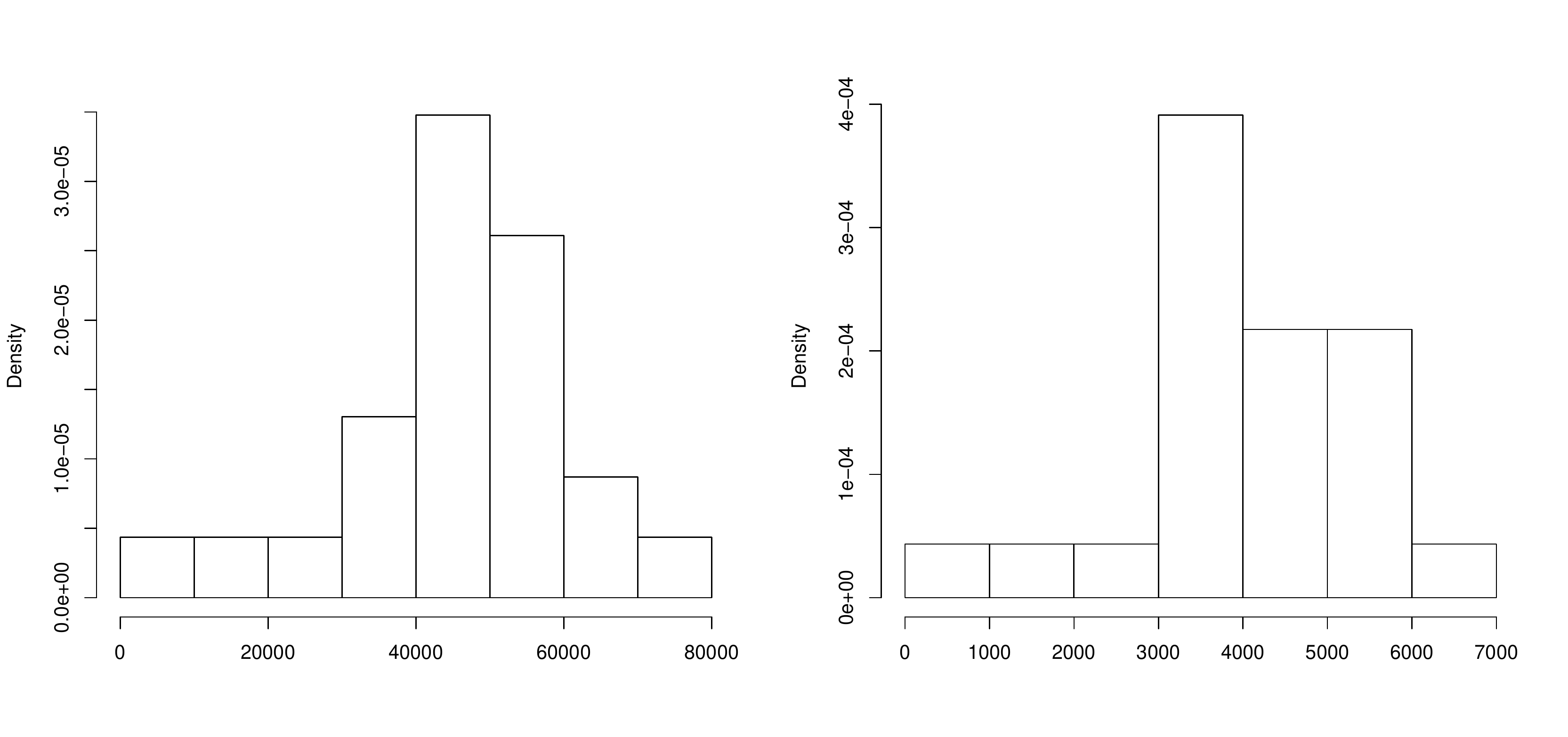}
\caption{(Left) The number of all incidents from 2001-2018 for each police district. (Right) The number of incidents described as "$\$500$ AND UNDER" for each police district.}
\label{fig3}
\end{figure}

Furthermore, we compared $\text{fPd}(\alpha, \mu)$  with the negative binomial $\text{NegBinom}(\text{size}, \text{mean})$  using the  usual chi-square goodness-of-fit test statistic and the maximum likelihood estimates for both models. Note that the chi-square test statistic follows, approximately, a chi-square distribution with $(k - 1-p)$ degrees of freedom where $k$ is the number of   cells and $p$ is the number of parameters to be estimated plus one. 

For illustration purposes, we used grid search for the $\text{fPd}(\alpha, \mu)$  as it is relatively fast due to  $\alpha$ being bounded in $(0,1)$ and to $\mu$, which is just  in the neighborhood of the true data mean scaled by $\Gamma (1 + \alpha).$ Observe that $5 \times 10^5$ random numbers are used in all the calculations.  From the results below, the fractional Poisson distribution  $\text{fPd}(\alpha, \mu)$ provides better fits than the negative binomial $\text{NegBinom}(size, mean)$  model for both data sets at $5 \%$ level of significance.  This exercise clearly demonstrates  the limitation of the negative binomial in dealing with left-skewed count data.

\begin{table}[h!t!b!p!]
\centering
\caption{\emph{Comparison between} $\text{fPd}(\alpha, \mu) $\emph{ and} $\text{NegBinom}(size, mean)$
\emph{fits}.} 

\begin{small}
\begin{tabular*}{5.6in}{l|ll}

\hline
Estimates & fPd & NegBinom  \\
\hline \\
MLE for Data 1 & $(\hat \alpha,  \hat \mu)  =(0.866, 41574.1)$  &   $ (size ,      mean) =  (1.602, 45590.17)$ \\
MLE for Data 2 & $(\hat \alpha ,\hat \mu) =  (0.85,  3607)$   &   $ (size,mean) =  (1.69, 4019.61 )$  \\ \\
Chi-square for Data 1 & 71191.64 &  202542.7 \\ 
Chi-square for Data 2 &  6442.634 & 21819.39 \\
\\
P-value for Data 1, $df= 70939$&  0.254 &  0 \\ 
P-value for Data 2, $df=6442$ &  0.495 & 0 \\ \\
\hline
 \end{tabular*}
\end{small}
  \label{t1}
\end{table}

\subsection{The case for gfPd$(\alpha, \alpha, 1, \mu)$   }

When $\beta=\alpha$ and $\delta=1$, we have   $X_{\alpha, \alpha} \; \stackrel{d}{=} \; $gfPd$(\alpha, \alpha, 1, \mu)$ with 
\begin{equation}
P(X_{\alpha, \alpha} = x) =   \Gamma (\alpha )   \mu^x E_{\alpha,\alpha (x +1) }^{x+1} (-\mu), \qquad \mu >0;\, x \in \mathbb{N}; \, \alpha \in (0, \;1].
\end{equation}

\begin{prop} The probability mass function can be written as 
\begin{equation}
P(X_{\alpha, \alpha} = x) =  \Gamma (\alpha +1) \frac{\mu^x}{x!} \int_{\mathbb{R}^+}y^{-\alpha(x+1)} e^{-\mu y^{-\alpha}} \nu_S(dy),
\end{equation}
where $\nu_S$ is the distribution of a random variable $S$ whose density has Laplace transform $\exp (-t^\alpha)$.
\end{prop}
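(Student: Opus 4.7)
The plan is to prove the identity by expanding the exponential inside the integral, commuting sum and integral, and then evaluating the resulting negative fractional moments of $S$ by means of the known Laplace transform of the stable density.

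First I would substitute the power series $e^{-\mu y^{-\alpha}}=\sum_{j\ge 0}(-\mu)^j y^{-\alpha j}/j!$ into the integral and interchange summation and integration. After this the right-hand side becomes
\begin{equation}
\Gamma(\alpha+1)\frac{\mu^x}{x!}\sum_{j\ge 0}\frac{(-\mu)^j}{j!}\,\bm{E}\!\left[S^{-\alpha(j+x+1)}\right].
\end{equation}
Justifying the interchange is routine: once the negative moments are evaluated in the next step, they turn out to be $O(1/\Gamma(\alpha(j+x+1)))$ times a polynomial in $j$, so the resulting series is entire in $\mu$ and Fubini applies.

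Next I would compute $\bm{E}[S^{-s}]$ for $s>0$ by the standard Mellin trick
\begin{equation}
\lambda^{-s}=\frac{1}{\Gamma(s)}\int_0^\infty e^{-\lambda u}u^{s-1}du,
\end{equation}
integrated against $\nu_S$ and then combined with the Laplace transform $\bm{E}[e^{-uS}]=e^{-u^\alpha}$. After the substitution $v=u^\alpha$ one obtains the well-known formula
\begin{equation}
\bm{E}\!\left[S^{-s}\right]=\frac{\Gamma(s/\alpha)}{\alpha\,\Gamma(s)},
\end{equation}
which in our case specializes to $\bm{E}[S^{-\alpha(j+x+1)}]=\Gamma(j+x+1)/(\alpha\,\Gamma(\alpha(j+x+1)))=(j+x)!/(\alpha\,\Gamma(\alpha(j+x+1)))$.

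Finally, I would substitute this back and rewrite $(j+x)!/x!=(x+1)_j$, so that the series becomes precisely
\begin{equation}
\frac{x!}{\alpha}\sum_{j\ge 0}\frac{(x+1)_j\,(-\mu)^j}{j!\,\Gamma(\alpha j+\alpha(x+1))}=\frac{x!}{\alpha}E_{\alpha,\alpha(x+1)}^{\,x+1}(-\mu).
\end{equation}
Combining this with the prefactor $\Gamma(\alpha+1)\mu^x/x!$ and using $\Gamma(\alpha+1)/\alpha=\Gamma(\alpha)$, the right-hand side collapses exactly to the pmf $\Gamma(\alpha)\mu^x E_{\alpha,\alpha(x+1)}^{\,x+1}(-\mu)=P(X_{\alpha,\alpha}=x)$. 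The only genuinely delicate step is the computation of the negative fractional moments of $S$; everything else is bookkeeping with the Pochhammer symbol and the Mittag--Leffler series.
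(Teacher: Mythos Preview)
Your proposal is correct and follows essentially the same route as the paper: expand $e^{-\mu y^{-\alpha}}$ as a power series, evaluate the negative moments $\bm{E}[S^{-\alpha(j+x+1)}]$, and identify the resulting series with $E_{\alpha,\alpha(x+1)}^{x+1}(-\mu)$. The only cosmetic difference is that the paper quotes the moment formula in the equivalent form $\bm{E}[S^{-\alpha m}]=\Gamma(1+m)/\Gamma(1+\alpha m)$ without derivation, whereas you derive $\bm{E}[S^{-s}]=\Gamma(s/\alpha)/(\alpha\Gamma(s))$ via the Mellin--Laplace trick; the two agree after applying $\Gamma(z+1)=z\Gamma(z)$.
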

\begin{proof} Note that  
\begin{align}
\!\!\!\!  & \frac{1}{x!}\int_{\mathbb{R}^+}y^{-\alpha(x+1)} e^{-\mu y^{-\alpha}} \nu_S(dy) =  \frac{1}{x!} \sum_{k \geq 0} \frac{(-\mu)^k}{k!}  \int_{\mathbb{R}^+}y^{-\alpha k -\alpha(x+1)} \nu_S (dy)  \\
& = \frac{1}{x!} \sum_{k \geq 0} \frac{(-\mu)^k}{k!} \frac{\Gamma ( 1 + k+x +1) }{\Gamma (1 +\alpha k + \alpha (x+1) )} \notag 
= \sum_{k \geq 0} \frac{(-\mu)^k}{k!} \frac{  \Gamma ( k+x +1) }{\alpha  x! \Gamma (\alpha k + \alpha (x+1))} \\ 
& = \frac{1}{\alpha} E_{\alpha,\alpha (x +1) }^{x+1} (-\mu). \notag
\end{align} 
\end{proof}

The above result provides an algorithm to evaluate the  probability mass function as 
\begin{align}
P(X_{\alpha, \alpha} = x) & =   \Gamma (\alpha +1  )   \frac{\mu^x}{x!} \bm{E}\left( S^{-\alpha(x+1)} e^{-\mu S^{-\alpha}}   \right) \\
& \approx \Gamma (\alpha +1  )   \frac{\mu^x}{x!} \left(\frac{1}{N} \sum_{j=1}^N S_j^{-\alpha(x+1)} e^{-\mu S_j^{-\alpha}}  \right). \notag
\end{align}
Thus, we can now estimate $\alpha$ and $\mu$ using maximum likelihood just like in the fPd case. The maximum likelihood estimates for the two crime datasets above are given in Table \ref{xxx} below. The chi-square goodness-of-fit test statistics  are   large, indicating  bad fits.   

\begin{table}[h!t!b!p!]
\centering
\caption{\label{xxx}\emph{Maximum likelihood estimates} for  $\text{gfPd}(\alpha, \alpha, 1, \mu)$.}

\begin{small}
\begin{tabular*}{3in}{l|l}
\hline
Estimates & $\text{gfPd}(\alpha, \alpha, 1, \mu)$  \\
\hline \\
MLE for Data 1 & $(\hat \alpha,  \hat \mu)  =(0.844, 38276)$    \\
MLE for Data 2 & $(\hat \alpha ,\hat \mu) =  (0.794, 3020  )$    \\ \\
Chi-square for Data 1 &  15609324   \\ 
Chi-square for Data 2 &   966402.5  \\
\\
\hline
 \end{tabular*}
\end{small}
  \label{t2}
\end{table}

\begin{Remark}
From \eqref{momf}, the random variable $X_{\alpha,\alpha}$ has factorial moments
\begin{equation}
a_{k} = \frac{\Gamma (\alpha ) \mu^{k} k!}{\Gamma ( \alpha  + \alpha k )}, \qquad k \geq 0.
\label{factfPd2}
\end{equation}
Thus the pgf is $G_{X_{\alpha,\alpha}}(u)= \Gamma (\alpha) E_{\alpha,\alpha}^1 \left( \mu \left( u-1\right)\right)$, $|u| \le 1$.
\end{Remark}

From \eqref{skew} and \eqref{factfPd2},  the symmetry structure of $X_{\alpha, \alpha}$ can be determined as follows:
\begin{equation}
\gamma(X_{\alpha, \alpha})= \frac{\Gamma (\alpha) \left(\frac{6}{ \Gamma (4 \alpha) } + \frac{6}{\mu \Gamma (3\alpha) } + \frac{1}{\mu^2 \Gamma (2\alpha) } -\frac{6}{\Gamma (2\alpha)\Gamma (3\alpha) } +\frac{2}{\Gamma (2\alpha)^3}  - \frac{3}{\mu \Gamma (2\alpha)^2} \right)}{ \left(\frac{1}{\mu \Gamma (2\alpha) } + \frac{2}{\Gamma (3\alpha) } - \frac{1}{ \Gamma (2\alpha)^2} \right)^{3/2}  }.  
\end{equation}
Moreover,
\begin{equation}
\lim_{\mu \to \infty}  \gamma(X_{\alpha,\alpha}) = \frac{\Gamma (\alpha) \left(\frac{6}{ \Gamma (4 \alpha) }   -\frac{6}{\Gamma (2\alpha)\Gamma (3\alpha) } +\frac{2}{\Gamma (2\alpha)^3}  \right)}{ \left(\frac{2}{\Gamma (3\alpha) } - \frac{1}{ \Gamma (2\alpha)^2} \right)^{3/2}  }  \neq 0, 
\label{skew2}
\end{equation}
which vanishes if $\alpha=1$ (Poisson distribution). Moreover, (\ref{skew2}) is non-negative and decreasing: this explains the  bad fits indicated by the large chi-square values above.

\section{Underdispersion and overdispersion for weighted Poisson distributions}\label{wpdd}

Weighted Poisson distributions \cite{rao65} provide a unifying approach for modelling both  overdispersion and underdispersion \cite{koko2006}. Let $Y$ be a Poisson random variable of parameter $\lambda >0$ and let $Y^{w}$ be the corresponding WPD with weight function $w$. 
\begin{Theorem}
If $\bm{E}w(Y+k)<\infty$
for all $k \in \mathbb{N}$, and $a_k=\lambda^k h(\lambda,k)$, where $h(\lambda,k)=\frac{ \bm{E}w(Y+k)}{ \bm{E}w(Y)}$, satisfies \eqref{condition}, then $Y^{w}$ has factorial moments $a_k$.
\end{Theorem}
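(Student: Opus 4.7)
The plan is to compute the factorial moments of $Y^w$ directly from its probability mass function and verify that they coincide with the prescribed $a_k$. By definition of a weighted Poisson random variable,
\begin{equation*}
P(Y^w = y) = \frac{w(y)\, e^{-\lambda} \lambda^y / y!}{\bm{E}w(Y)}, \qquad y \in \mathbb{N},
\end{equation*}
where the denominator is the normalizing constant and is finite by the $k=0$ case of the hypothesis. First I would write the $k$-th factorial moment explicitly as
\begin{equation*}
\bm{E}[Y^w(Y^w-1)\cdots(Y^w-k+1)] \;=\; \sum_{y\ge k} \frac{y!}{(y-k)!}\,\frac{w(y)\,e^{-\lambda}\lambda^y}{y!\,\bm{E}w(Y)},
\end{equation*}
noting that terms with $y<k$ vanish because the falling factorial is zero.

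Next I would cancel the $y!$ factors, substitute $j=y-k$, and factor out $\lambda^k$ to get
\begin{equation*}
\frac{\lambda^k e^{-\lambda}}{\bm{E}w(Y)} \sum_{j\ge 0} \frac{w(j+k)\lambda^j}{j!} \;=\; \frac{\lambda^k\, \bm{E}w(Y+k)}{\bm{E}w(Y)} \;=\; \lambda^k h(\lambda,k),
\end{equation*}
where the middle equality simply recognises the shifted Poisson expectation. The assumption $\bm{E}w(Y+k)<\infty$ legitimises the reindexing (absolute convergence of the series), and the identification of $a_k$ as the $k$-th factorial moment of $Y^w$ is then immediate.

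The proof is essentially a one-line shift identity for Poisson expectations, so I do not anticipate a real obstacle. The only subtlety worth flagging is the role of condition \eqref{condition}: it is not needed to compute the factorial moments, but it is invoked so that the pmf of $Y^w$ admits the representation \eqref{mf1} and so that $\{a_k\}$ plays the role of the coefficient sequence of the pgf in the form \eqref{forma}. In a careful write-up I would also note that if $w$ is allowed to change sign one should replace $\bm{E}w(Y+k)$ by $\bm{E}|w|(Y+k)$ in the finiteness hypothesis to justify Fubini; under the standing assumption $w\ge 0$ used throughout the WPD literature this is automatic.
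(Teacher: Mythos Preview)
Your argument is correct and rests on the same Poisson shift identity as the paper's proof; the only difference is framing. The paper expands the pgf $G_{Y^w}(u)=\sum_{k\ge 0}u^k P(Y^w=k)$ by writing $u^k=((u-1)+1)^k$, applying the binomial theorem, and interchanging sums to read off the coefficients of $(u-1)^k/k!$, whereas you compute $\bm{E}[(Y^w)_{(k)}]$ directly from the pmf. Both reduce to the same reindexing $j=y-k$ and the same recognition of $e^{-\lambda}\sum_{j\ge 0}w(j+k)\lambda^j/j!=\bm{E}w(Y+k)$. Your observation about condition \eqref{condition} is also accurate: neither proof uses it to identify the factorial moments; it is only there so that the general framework \eqref{forma}--\eqref{mf1} of the introduction applies to $Y^w$.
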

\begin{proof}
It is enough to observe that the pgf $G_{Y^{w}}(u)$ can be written in form \eqref{forma} as follows:
\begin{align}
G_{Y^{w}}(u) & =  \sum_{k \geq 0} (u + 1 -1 )^k \frac{e^{-\lambda} \lambda^k w(k)}{k! \bm{E}w(Y)} =  \sum_{k \geq 0} \frac{(u - 1)^k}{k!} \sum_{j \geq 0} \frac{e^{-\lambda} \lambda^{j+k} w(j+k)}{j! \bm{E}w(Y)} \\
&= \sum_{k \geq 0} \frac{(u - 1)^k}{k!}  \lambda^k h(\lambda,k). \notag
\end{align} 
\end{proof}
Let $T$ be the linear left-shift operator acting on number sequences. Let us still denote with $T$ its coefficientwise extension to the ring of formal power series in ${\mathbb R}_+[[\lambda]]$ \cite{Stanley}.
Next proposition links overdispersion and underdispersion of $Y^w$ respectively to a Tur\'an-type and a reverse Tur\'an-type inequality involving $T$.
\begin{Theorem}\label{rop}
The random variable $Y^w$ is overdispersed (underdispersed) if and only if
\begin{align}
    \label{cond1}
    f(\lambda) T^2 f(\lambda) > (<)\: [T f(\lambda)]^2,
\end{align}
where $f(\lambda) = \bm{E}w(Y)$.
\end{Theorem}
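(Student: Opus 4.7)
The plan is to reduce the theorem to a straightforward algebraic manipulation on top of the previous result in the paper, namely the formula $a_k = \lambda^k h(\lambda,k)$ with $h(\lambda,k) = \mathbf{E} w(Y+k)/\mathbf{E} w(Y)$, and the overdispersion/underdispersion criterion $a_2 \gtrless a_1^2$ recalled in the Introduction.

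First I would reinterpret the operator $T$ on $f$. Since $f(\lambda) = \mathbf{E} w(Y)$ is determined by the sequence $\{w(j)\}_{j \geq 0}$ via Poisson averaging, and since the left-shift of this sequence is $\{w(j+1)\}_{j \geq 0}$, the induced coefficientwise shift on the associated element of $\mathbb{R}_+[[\lambda]]$ gives
\begin{equation*}
T f(\lambda) = \mathbf{E} w(Y+1), \qquad T^k f(\lambda) = \mathbf{E} w(Y+k),
\end{equation*}
the latter by induction. This identification is the one nontrivial bookkeeping point of the proof; once it is in place, everything else is mechanical.

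Next I would plug this into $a_1$ and $a_2$. From the previous theorem,
\begin{equation*}
a_1 = \lambda \, \frac{T f(\lambda)}{f(\lambda)}, \qquad a_2 = \lambda^2 \, \frac{T^2 f(\lambda)}{f(\lambda)}.
\end{equation*}
By the overdispersion (resp.\ underdispersion) criterion $a_2 > a_1^2$ (resp.\ $a_2 < a_1^2$), and using that $\lambda > 0$ and $f(\lambda) > 0$ (since $w \geq 0$ and the expectation is finite and nonzero under the standing hypotheses), we may clear denominators and $\lambda^2$ to obtain
\begin{equation*}
a_2 \gtrless a_1^2 \iff \lambda^2 \, \frac{T^2 f(\lambda)}{f(\lambda)} \gtrless \lambda^2 \, \frac{[T f(\lambda)]^2}{f(\lambda)^2} \iff f(\lambda)\, T^2 f(\lambda) \gtrless [T f(\lambda)]^2,
\end{equation*}
which is \eqref{cond1}.

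The only genuinely delicate point is the identification $T^k f(\lambda) = \mathbf{E} w(Y+k)$; I would make this explicit by writing $f(\lambda)$ via its series representation coming from the Poisson weights and checking that the coefficientwise left-shift on $\mathbb{R}_+[[\lambda]]$ corresponds, under that representation, to replacing $w(j)$ by $w(j+1)$ inside the expectation. After that, the proof is a one-line cancellation, so I do not anticipate any additional obstacle beyond fixing this notational correspondence.
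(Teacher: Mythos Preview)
Your proposal is correct and follows essentially the same route as the paper: start from the criterion $a_2 \gtrless a_1^2$, use the formula $a_k=\lambda^k\,\bm{E}w(Y+k)/\bm{E}w(Y)$ from the preceding theorem, identify $T^jf(\lambda)$ with $\bm{E}w(Y+j)$ via the series representation, and cancel the common positive factors. The paper's proof is terser but makes exactly these moves, writing out the power-series form of $\bm{E}w(Y+j)$ explicitly before invoking $T^j f(\lambda)=\sum_{k\ge0}\frac{\lambda^k}{k!}T^j[w(k)]$.
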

\begin{proof}
The random variable $Y^w$ is overdispersed if and only if $a_2 > a_1^2$, that is 
$\bm{E}w(Y)\bm{E}w(Y+2)>[ \bm{E}w(Y+1)]^2.$ 
Equivalently,
\begin{equation}
\left(\sum_{k \geq 0}  \frac{ \lambda^k}{k!} w(k) \right)\left( \sum_{k \geq 0}  \frac{ \lambda^{k}}{k!} w(k+2) \right) > \left( \sum_{k \geq 0} \frac{ \lambda^{k}}{k!} w(k+1) \right)^2,
\label{(cond1)}
\end{equation}
and the result follows observing that 
$T^j f(\lambda) = \sum_{k \geq 0}  \frac{\lambda^k}{k!}
T^j[ w(k)]$ for $j=1,2$.
\end{proof}
\begin{Remark} Observe that when $w$ does not depend on $\lambda$, then $T^j f(\lambda) = D_{\lambda}^j f(\lambda)$ for $j=1,2.$ In this case, condition \eqref{cond1} is equivalent to $f(\lambda) D^2_{\lambda}f(\lambda)
> (<)\:[D_{\lambda}f(\lambda)]^2$, i.e.\ log-convexity (log-concavity) of $f$. This is already known in the literature (see Theorem 3 of \cite{koko2006}). 
\end{Remark}

\begin{Remark}
Note that from \eqref{(cond1)} we have 
\begin{equation}
\sum_{k \geq 0} \frac{\lambda^k}{k!} \left( \sum_{j=0}^k  \binom{k}{j} w(j) w(k-j+2) \right) > \sum_{k \geq 0}  \frac{\lambda^k}{k!} \left( \sum_{j=0}^k \binom{k}{j} w(j+1) w(k-j+1) \right)
\end{equation}
and some algebra leads us to the following sufficient condition for overdispersion or underdispersion: the random variable $Y^w$ is overdispersed (underdispersed) if
    \begin{align}
       \sum_{j=0}^{k+1} \left[\binom{k}{j}- \binom{k}{j-1} \right] w(j) w(k-j+2) > (<) \: 0 .
       \label{(32)}
    \end{align}
\end{Remark}

Notice that $\bm{E}w(Y)$ is a function of the Poisson parameter $\lambda$. For the sake of clarity, from now on, let us denote it by $\eta(\lambda)$.
Weighted Poisson distributions with a weight function $w$ not depending on the Poisson parameter $\lambda$ are also known as power series distributions (PSD) \cite{John2005} and it is easy to see that the factorial generating function in this case reads
\begin{equation}
Q(t)=\frac{\eta[\lambda(t+1)]}{\eta(\lambda)}
\end{equation}
with factorial moments 
\begin{equation} a_r=\frac{\lambda^r}{\eta(\lambda)} \frac{d^r}{d\lambda^r} [\eta(\lambda)],  \qquad r \geq 1.
\end{equation}
A special well-known family of PSD is the generalized hypergeometric probability distribution (GHPD) \cite{kemp1968},
where
\begin{equation}
Q(t) = \frac{_pF_q \left[(a);(b); \lambda(t+1) \right]}{_pF_q \left[ (a) ;(b); t \right]}
\end{equation}
with $_pF_q$ given in \eqref{hyper}.
Depending on the values of the parameters of GHPD both overdispersion and underdispersion are possible \cite{trip1979}. For $p=q=1,$ a special case of GHPD is 
the hyper-Poisson distribution \cite{CB1964}.
In the next section we will analyze an alternative WPD in which the hyper-Poisson distribution remains a special case and that exhibits both underdispersion and overdispersion.

\subsection{A novel flexible WPD allowing overdispersion or underdispersion}\label{further}

Let $Y^w$ be a WP random variable with weight function
\begin{align}
    w(k) = \frac{\Gamma(k+\gamma)}{\Gamma(\alpha k +\beta)^\nu},
\end{align}
where $\gamma > 0$, $ \min(\alpha, \beta,\nu)\ge 0$, $\alpha+\beta>0$. Moreover, if $\gamma=\beta$ and $\nu \ge 1$ then $\beta$ is allowed to be zero.
Since it is a PSD, the random variable $Y^w$ is characterized by the normalizing function
\begin{align}
    \eta(\lambda) = \eta_{\alpha,\beta}^{\gamma,\nu}(\lambda) = \sum_{k=0}^\infty \frac{\lambda^k}{k!} \frac{\Gamma(k+\gamma)}{\Gamma(\alpha k +\beta)^\nu}.
\end{align}
The convergence of the above series can be ascertained as follows. Let $\gamma \le 1$; by Gautschi's inequality (see \cite{qi}, formula (2.23)) we have the upper bound
\begin{align}
    \eta(\lambda) \le \frac{\Gamma(\gamma)}{\Gamma(\beta)^\nu} + \sum_{k=1}^{\infty} \frac{\lambda^k k^{\gamma-1}}{\Gamma(\alpha k + \beta)^\nu},
\end{align}
which converges by ratio test and taking into account the well-known asymptotics for the ratio of gamma functions (see \cite{tricomi}).
Now, let $\gamma > 1$. In this case an upper bound can be derived by formula (3.72) of \cite{qi}:
\begin{align}
    \eta(\lambda) < \frac{\Gamma(\gamma)}{\Gamma(\beta)^\nu} + \sum_{k=1}^{\infty} \frac{\lambda^k (k+\gamma)^{\gamma-1}}{\Gamma(\alpha k + \beta)^\nu}.
\end{align}
Again, this converges by ratio test and recurring to the above-mentioned asymptotic behaviour of the ratio of gamma functions.

The random variable $Y^w$ specializes to some well-known classical random variables. Specifically, we recognize the following:

\begin{enumerate}
    \item If $\gamma=\beta=\alpha=\nu=1$, we recover the Poisson distribution as the weights equal unity for each $k$.
    \item If $\gamma=\beta=\alpha=1$, we recover the COM-Poisson distribution \cite{COMPOISS} of Poisson parameter $\lambda$ and dispersion parameter $\nu$.
    \item If $\gamma=\alpha=\nu=1$ we obtain the hyper-Poisson distribution \cite{CB1964}.
    \item If $\gamma=\nu=1$ we obtain the alternative Mittag--Leffler distribution considered e.g.\ in \cite{CB1964} and \cite{Herrmann}.
    \item If $\gamma=1$ we recover the fractional COM-Poisson distribution \cite{Garra17}.
    \item If $\nu=1$ we obtain the alternative generalized Mittag-Leffler distribution \cite{tomovski}.
\end{enumerate}

Since $Y^w$ is a PSD, it is easy to derive its factorial moments,
\begin{align} \label{factmoment}
    a_r = \frac{\lambda^r}{\eta_{\alpha,\beta}^{\gamma,\nu}(\lambda)} \sum_{k=r}^\infty \frac{\lambda^{k-r}}{(k-r)!} \frac{\Gamma(k+\gamma)}{\Gamma(\alpha k+\beta)^\nu}
    = \lambda^r \frac{\eta_{\alpha,\alpha r+\beta}^{\gamma+r,\nu}(\lambda)}{\eta_{\alpha,\beta}^{\gamma,\nu}(\lambda)},
\end{align}
from which the moments are immediately derived by recalling formula \eqref{momfac}.
\begin{Remark}
\label{faa}
Since $\eta_{\alpha,\alpha r+\beta}^{\gamma+r,\nu}(\lambda) = \sum_{j \geq 0} \frac{\lambda^j}{j!} A_{j,r}$ with
\begin{equation}
A_{j,r} =  \frac{\Gamma(j + r + \gamma)}{\Gamma[\alpha (j+r)+\beta)]^{\nu}},
\end{equation}
by using Fa\`a di Bruno's formula \cite{Stanley} one has
\begin{align}
    a_r = \lambda^r \sum_{j \geq 0} \frac{\lambda^j}{j!} \sum_{i=0}^j \binom{j}{i} A_{j-i,r} D_i \quad \text{with} \quad D_i= \sum_{k=0}^i (-1)^k A_{0,0}^{-(k+1)} 
    {\mathfrak B}_{i,k}(A_{1,0}, \ldots, A_{i-k+1,0}),
\end{align}
where $\{A_{j,0}\}$ and and $\{{\mathfrak B}_{i,k}\}$ are the coefficients of $\eta_{\alpha, \beta}^{\gamma,\nu}(\lambda)$ and the partial Bell exponential polynomials \cite{Stanley}, respectively.
\end{Remark}
Furthermore, the probability mass function reads
\begin{align}
    P(Y^w=x) = \frac{\lambda^x}{x!}\frac{\Gamma(x+\gamma)}{\Gamma(\alpha x + \beta)^\nu}\frac{1}{\eta_{\alpha,\beta}^{\gamma,\nu}(\lambda)}, \qquad x \ge 0.
\end{align}

Concerning the variability of $Y^w$, by using Theorem 3 of \cite{koko2006}, the preceding Lemma and the succeeding Corollary, that is by imposing log-convexity (log-concavity) of the weight function, we write for $y \in \mathbb{R}_+$,
\begin{align}
    \frac{d^2}{d y^2} \log \frac{\Gamma(y+\gamma)}{\Gamma(\alpha y + \beta)^\nu} & = \frac{d}{dy} \left[ \frac{1}{\Gamma(y+\gamma)} \frac{d}{dy}\Gamma(y+\gamma) - \frac{\nu}{\Gamma(\alpha y + \beta)} \frac{d}{dy} \Gamma(\alpha y+\beta) \right] \\
    & = \frac{d}{dy} \left[ \psi(y+\gamma) - \nu \alpha \, \psi (\alpha y+\beta) \right], \notag
\end{align}
where $\psi(z)$ is the Psi function 
(see \cite{leb72}, Section 1.3).
In addition, by considering formula (6.4.10) of \cite{as},
\begin{align}
    \frac{d^2}{d y^2} \log \frac{\Gamma(y+\gamma)}{\Gamma(\alpha y + \beta)^\nu} = \sum_{r=0}^\infty (y+\gamma+r)^{-2} - \nu \alpha^2 \sum_{r=0}^\infty (\alpha y+\beta +r)^{-2}.
\end{align}
Therefore log-convexity (log-concavity) of $w(y)$ is equivalent to the condition
\begin{align}
    \label{group}
    \nu < (>) \, \frac{\sum_{r=0}^\infty (y+\gamma+r)^{-2}}{\alpha^2\sum_{r=0}^\infty (\alpha y+\beta +r)^{-2}}, \qquad \forall \: y \in \mathbb{R}_+.
\end{align}
This yields that if \eqref{group} holds, then $Y^w$ is overdispersed (underdispersed).

\begin{Remark}[Classical special cases]
    If $\alpha=\beta=\gamma=1$, then $Y^w$ is the COM-Poisson random variable and \eqref{group} correctly reduces to the ranges $\nu>1$ giving underdispersion and $\nu \in [0,1)$ giving overdispersion.
    If $\alpha=\gamma=\nu=1$, then $Y^w$ is the hyper-Poisson random variable and \eqref{group} correctly reduces to the ranges $\beta>1$ (overdispersion) and $\beta \in [0,1)$ (underdispersion).
    This holds as $\beta \mapsto \sum_{r=0}^\infty (y+\beta+r)^{-2}$ is decreasing for all fixed $y \in \mathbb{R}_+$.
\end{Remark}

In the two next sections we analyze  two special cases of interest, the first of which, to the best of our knowledge, is still not considered in the literature.

\subsubsection{Model I}\label{I}

We first introduce the special case in which $\alpha=1$, $\gamma=\beta$, $\beta>0$, and $\beta$ is allowed to be zero only if $\nu \ge 1$. This is a three-parameter  ($\lambda,\nu,\beta$) model which retains the same simple conditions for underdispersion and overdispersion as for the COM-Poisson model. Indeed, formula \eqref{group} reduces to $\nu>1$ and $\nu \in [0,1)$, respectively. However, this model is more flexible than the COM-Poisson model because of the presence of the parameter $\beta$.  Notice that the pmf can be written as 
\begin{equation}\label{tre}
P(Y^w = x) = \frac{1}{x!}\exp \left(x  \log \lambda + (1-\nu) \log  \Gamma (x + \beta) -\log  \eta_{1,\beta}^{\beta,\nu}(\lambda)  \right),    
\end{equation}
which suggests that Model I belongs to the exponential family of distributions with parameters $\log \lambda$ and $1-\nu$, where $\beta$ is a nuisance parameter or is known. Figures \ref{Fig4} and \ref{Fig5} show sample shapes of this family of distributions.

\begin{figure}[h!t!b!p!]
\centering
\includegraphics[scale=.45]{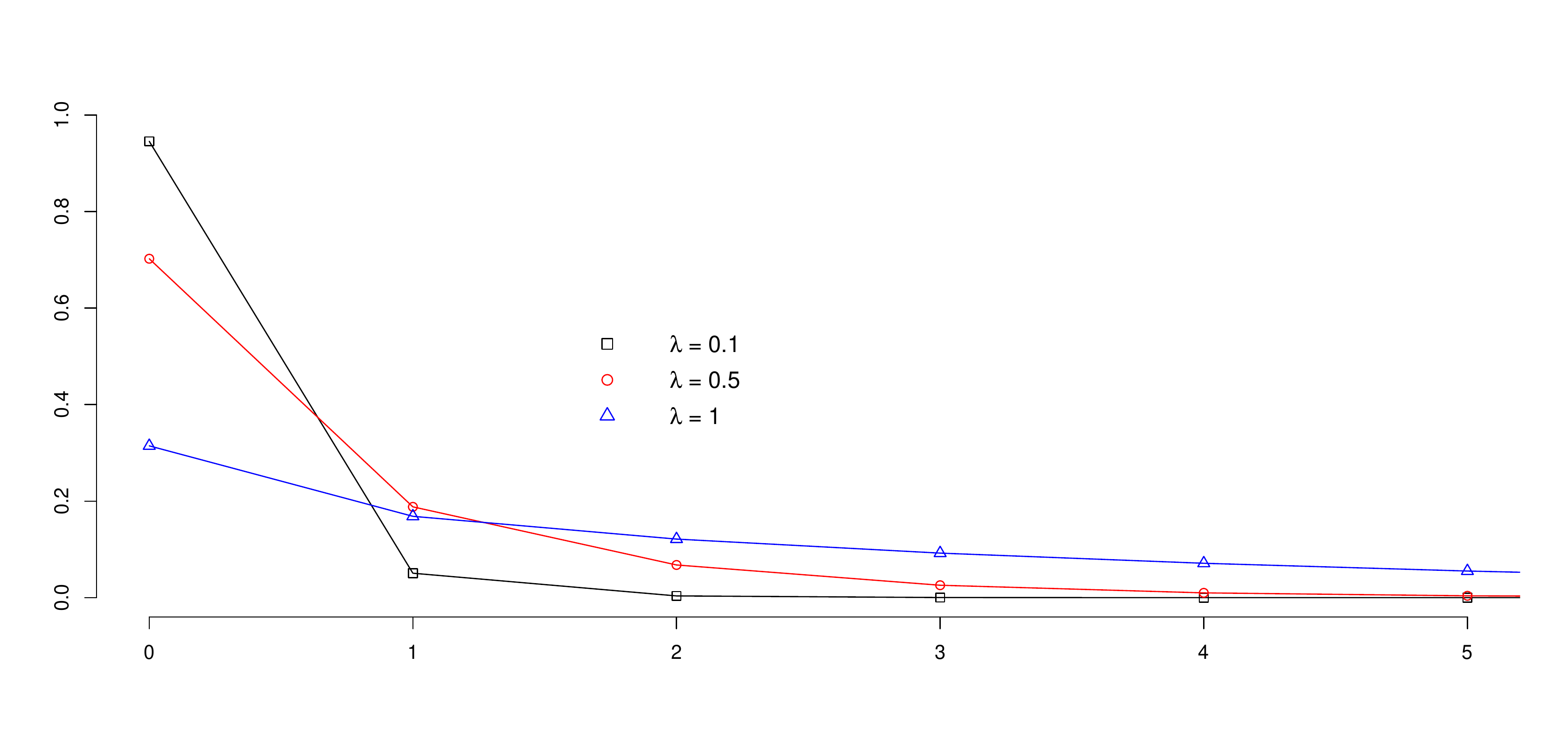}
\caption{Probability mass functions \eqref{tre} for $\lambda =0.1, 0.5, 1$,  $\beta = 0.5$, and $\nu=0.1.$}
\label{Fig4}
\end{figure}

\begin{figure}[h!t!b!p!]
\centering
\includegraphics[scale=.45]{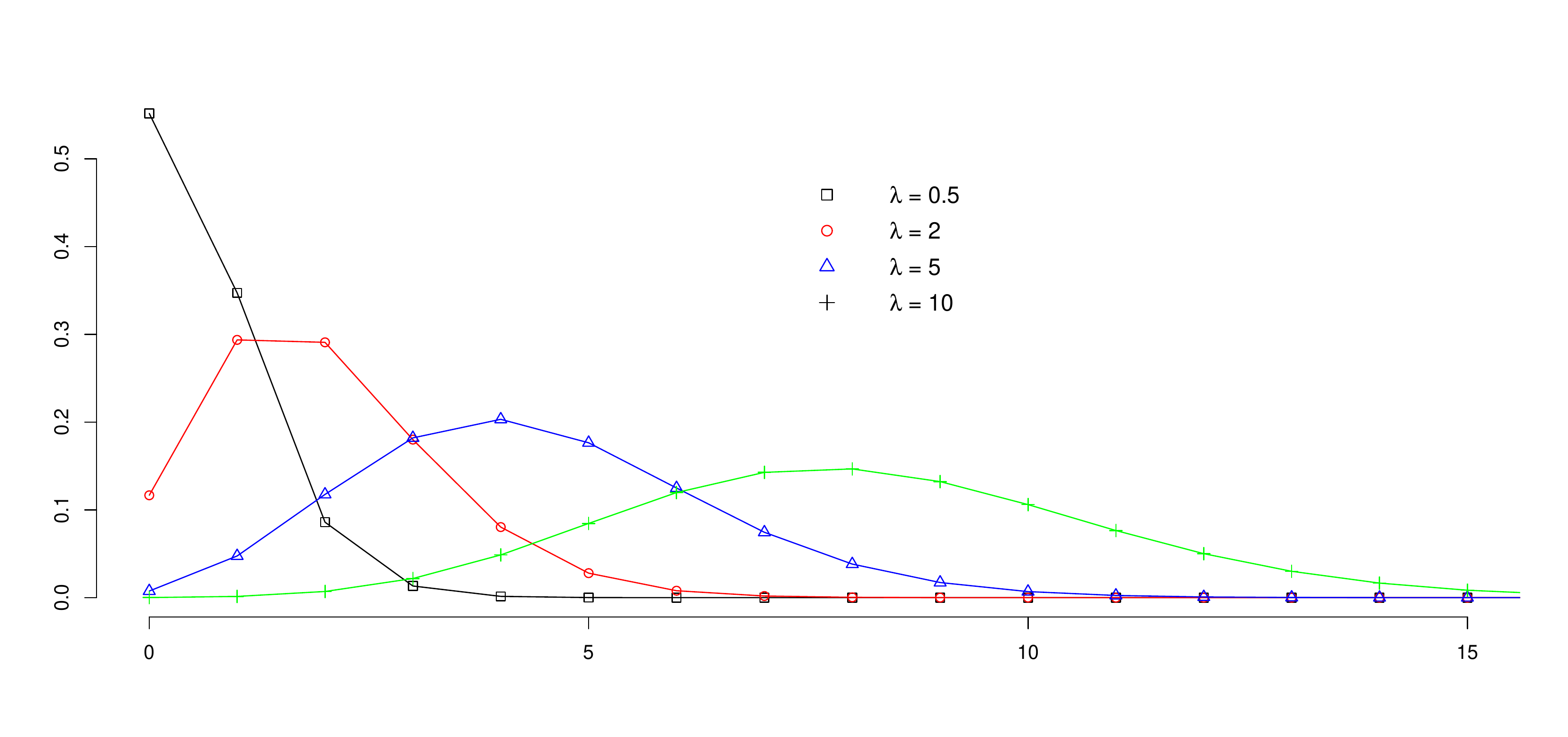}
\caption{Probability mass functions \eqref{tre} for $\lambda =0.5, 2, 5, 10$,  $\beta = 0.1$, and $\nu= 1.1.$}
\label{Fig5}
\end{figure}
Note that distributions in Figure \ref{Fig4} (Figure \ref{Fig5}) are   overdispersed (underdispersed). Also, 
\begin{equation}
    P(Y^w = x+1) = \frac{\lambda}{(x+1)(x+\beta)^{\nu-1}} P(Y^w = x).
\end{equation}
This gives a procedure to calculate iteratively the probability mass function and generate random numbers. The only thing to figure out is to compute $\eta_{1,\beta}^{\beta,\nu}(\lambda)$ in order to obtain $P(Y^w = 0)=1/[\Gamma (\beta)^{\nu-1} \eta_{1,\beta}^{\beta,\nu}(\lambda)]$.

An upper bound for the normalizing function $\eta_{1,\beta}^{\beta,\nu}(\lambda)$ can be determined similarly to \cite{mink}, Section 3.2, taking into consideration that the multiplier
\begin{align}
    \label{multi}
    \lambda (j+\beta)^{1-\nu}/(j+1)
\end{align}
is ultimately monotonically decreasing. Hence, we can approximate the normalizing constant  $\eta_{1,\beta}^{\beta,\nu}(\lambda)$ by truncating the series and bound the truncation error $R_{\widetilde{k}}$,
\begin{align}
    \eta_{1,\beta}^{\beta,\nu}(\lambda) &= \sum_{j=0}^{\widetilde{k}} \frac{\lambda^j}{j!} \Gamma(j+\beta)^{1-\nu} + R_{\widetilde{k}} \\
    &< \sum_{j=0}^{\widetilde{k}} \frac{\lambda^j}{j!} \Gamma(j+\beta)^{1-\nu} + \frac{\lambda^{\widetilde{k}+1}\Gamma(\widetilde{k}+1+\beta)^{1-\nu}}{(\widetilde{k}+1)!} \sum_{j=0}^\infty \varepsilon_{\widetilde{k}}^j \notag \\
    &< \sum_{j=0}^{\widetilde{k}} \frac{\lambda^j}{j!} \Gamma(j+\beta)^{1-\nu} + \frac{\lambda^{\widetilde{k}+1}\Gamma(\widetilde{k}+1+\beta)^{1-\nu}}{(\widetilde{k}+1)!\,(1-\varepsilon_{\widetilde{k}})}, \notag
\end{align}
where $\widetilde{k}$ is such that for $j>\widetilde{k}$ the multiplier \eqref{multi} is already monotonically decreasing and bounded above by $\varepsilon_{\widetilde{k}} \in (0,1)$.
Correspondingly, denoting with $\widetilde{\eta}_{1,\beta}^{\beta,\nu}(\lambda) = \sum_{j=0}^{\widetilde{k}} \frac{\lambda^j}{j!} \Gamma(j+\beta)^{1-\nu}$, the relative truncation error $R_{\widetilde{k}}/\widetilde{\eta}_{1,\beta}^{\beta,\nu}(\lambda)$ is bounded by
\begin{align}
    \frac{\lambda^{\widetilde{k}+1}\Gamma(\widetilde{k}+1+\beta)^{1-\nu}}{(\widetilde{k}+1)!\,(1-\varepsilon_{\widetilde{k}})\widetilde{\eta}_{1,\beta}^{\beta,\nu}(\lambda)}.
\end{align}

As a last remark, we can further simplify the model obtaining a two-parameter model. In order to do so, let $\nu=\beta$, with $\beta>0$. The obtained model still allows for underdispersion ($\beta >1$) and overdispersion ($\beta \in (0,1)$) and it should be directly compared with the COM-Poisson and the hyper-Poisson models.

\subsubsection{Model II}
If we set $\alpha=\nu=1$ we get  another three-parameter ($\lambda,\gamma,\beta$) model, special case of the alternative generalized Mittag-Leffler distribution (see point 6 above). The reparametrization $\beta=\xi \gamma$ together with condition \eqref{group} shows that both overdispersion ($\xi>1$) and underdispersion ($\xi \in (0,1)$) are possible. This comes from the fact that $\omega \mapsto \sum_{r=0}^\infty (y+\omega+r)^{-2}$ is decreasing for all fixed $y \in \mathbb{R}_+$. As for Model I,
the probability  distribution belongs to the exponential family with parameter $\log \lambda,$ with $\gamma$ and $\beta$ as nuisance parameters.
Explicitly, the pmf reads
\begin{align}\label{tru}
    P(Y^w=x) = \frac{\lambda^x}{x!}\frac{\Gamma(x+\gamma)}{\Gamma(x + \beta)}\frac{1}{\eta_{1,\beta}^{\gamma,1}(\lambda)}, \qquad x \ge 0,
\end{align}
and, as in the previous Section \ref{I}, the iterative representation
\begin{equation}
     P(Y^w = x+1) = \frac{\lambda (x+\gamma)}{(x+1)(x+\beta)} P(Y^w = x),
\end{equation}
allows an approximated evaluation of the pmf with error control, and consequently random number generation. Also in this case this holds as the involved multiplier is ultimately monotonically decreasing. Figures \ref{Fig6} and \ref{Fig7} show some forms of this class of distributions. Observe that distributions in Figure \ref{Fig6} (Figure \ref{Fig7}) are   underdispersed (overdispersed).

\begin{figure}[h!t!b!p!]
\centering
\includegraphics[scale=.45]{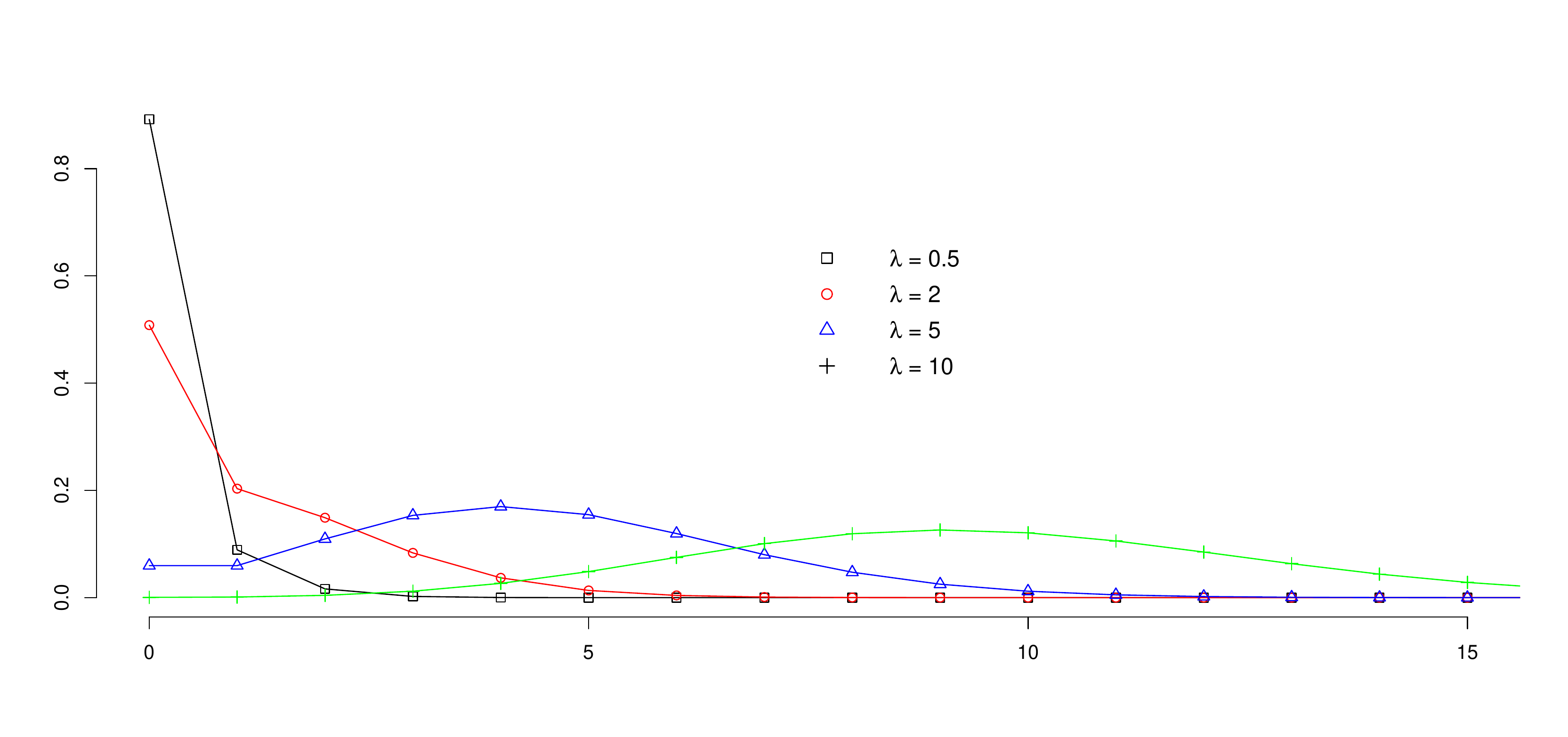}
\caption{Probability mass functions \eqref{tru} for $\lambda =0.5, 2, 5, 10$,  $\beta = 0.5$, and $\gamma=0.1.$}
\label{Fig6}
\end{figure}

\begin{figure}[h!t!b!p!]
\centering
\includegraphics[scale=.45]{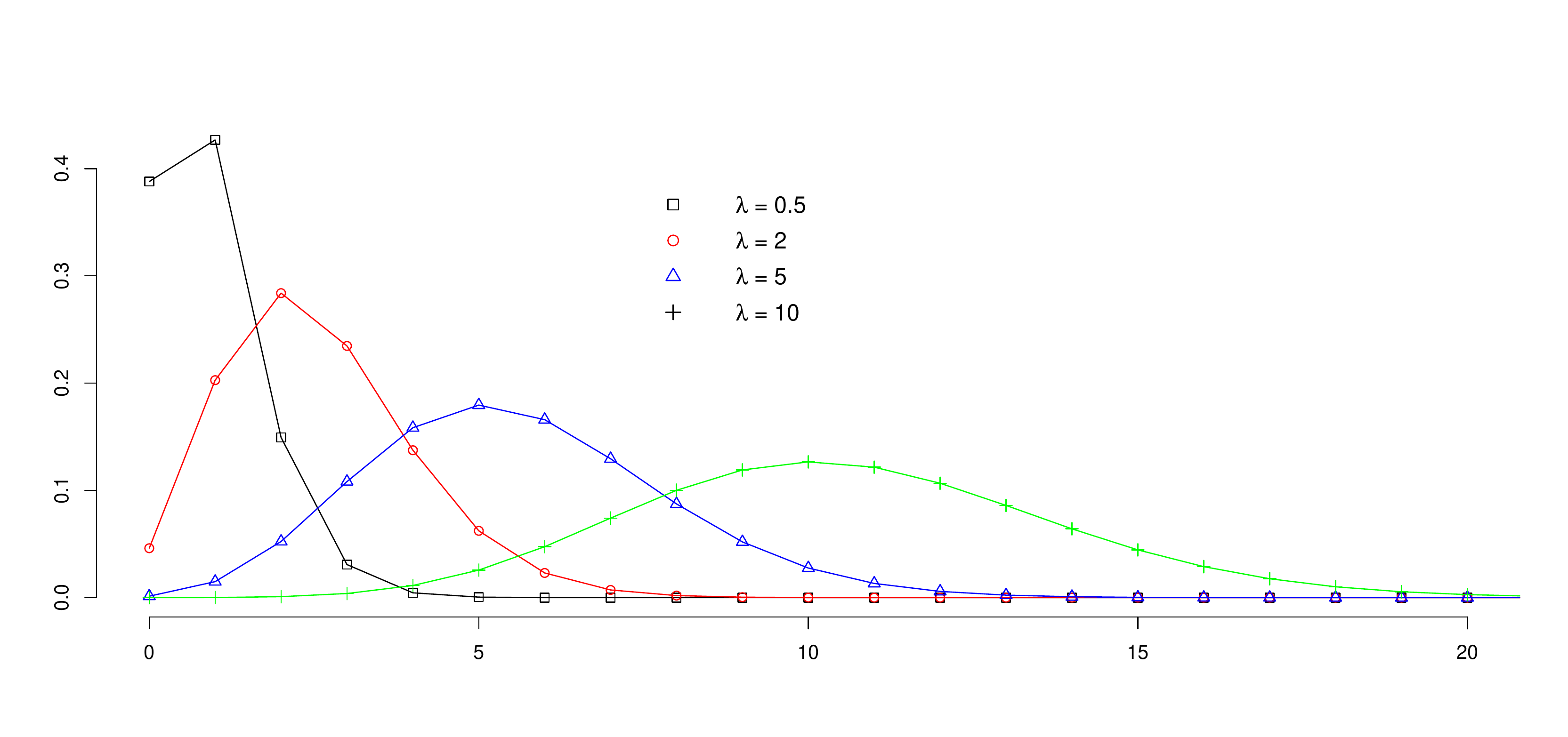}
\caption{Probability mass functions \eqref{tru} for $\lambda =0.5, 2, 5, 10$,  $\beta = 0.1$, and $\gamma= 1.1.$}
\label{Fig7}
\end{figure}

If we further let $\lambda=1$, we obtain a two-parameter model, still allowing for underdispersion if $\beta \in (0,\gamma)$ (or equivalently $\xi \in (0,1)$) and overdispersion if $\beta > \gamma$ (or $\xi > 1$), which is also directly comparable with the two-parameter Model I above, the COM-Poisson model, and the hyper-Poisson model.

\subsubsection{Comparison}

We now compare Model I and Model II with  known models that  allow overdispersion and underdispersion such as the COM-Poisson, generalized Poisson and hyper-Poisson models as cited above. Note that the hyper-Poisson distribution satisfies 

\begin{equation}
      P(Y^w = x+1) = \frac{\lambda}{(x+\beta)} P(Y^w = x).
\end{equation}

For comparison purposes, we first consider the number of fish caught  data\footnote[2]{\texttt{https://stats.idre.ucla.edu/stat/data/fish.csv}} shown in Figure \ref{fig4b} (left panel) below. The dataset corresponds to 239 groups (as 11 potential  outliers were removed)  that went to a state park  and state wildlife biologists  asked  visitors how  many fish they  caught.    The mean fish caught  is around 1.48 while the variance is 8.04.  Furthermore,  the \texttt{optimx} (for hyper-Poisson, Model I and Model II), \texttt{COMPoissonReg} (for COM-Poisson), \texttt{compoisson} (for COM-Poisson), and  \texttt{VGAM} (for generalized Poisson) packages in R are used for the maximum likelihood estimation and the chi-square goodness-of-fit tests.  In particular,  the  \texttt{L-BFGS-B} method from the \texttt{optimx} package  is  used and 1000 terms were summed for the normalizing constant $\eta_{\alpha,\beta}^{\gamma, \nu} (\lambda)$.    Just like the comparisons above, a chi-square distribution  is used as reference  where the degrees of freedom is the number of   cells minus the  number of model parameters. From Table \ref{t3}, Model I and Model II clearly outperform the other models although the generalized Poisson  and hyper-Poisson (subcase of WPD) also provide good fits to the fish count data. 

\begin{figure}[h!t!b!p!]
\centering
\includegraphics[scale=.8]{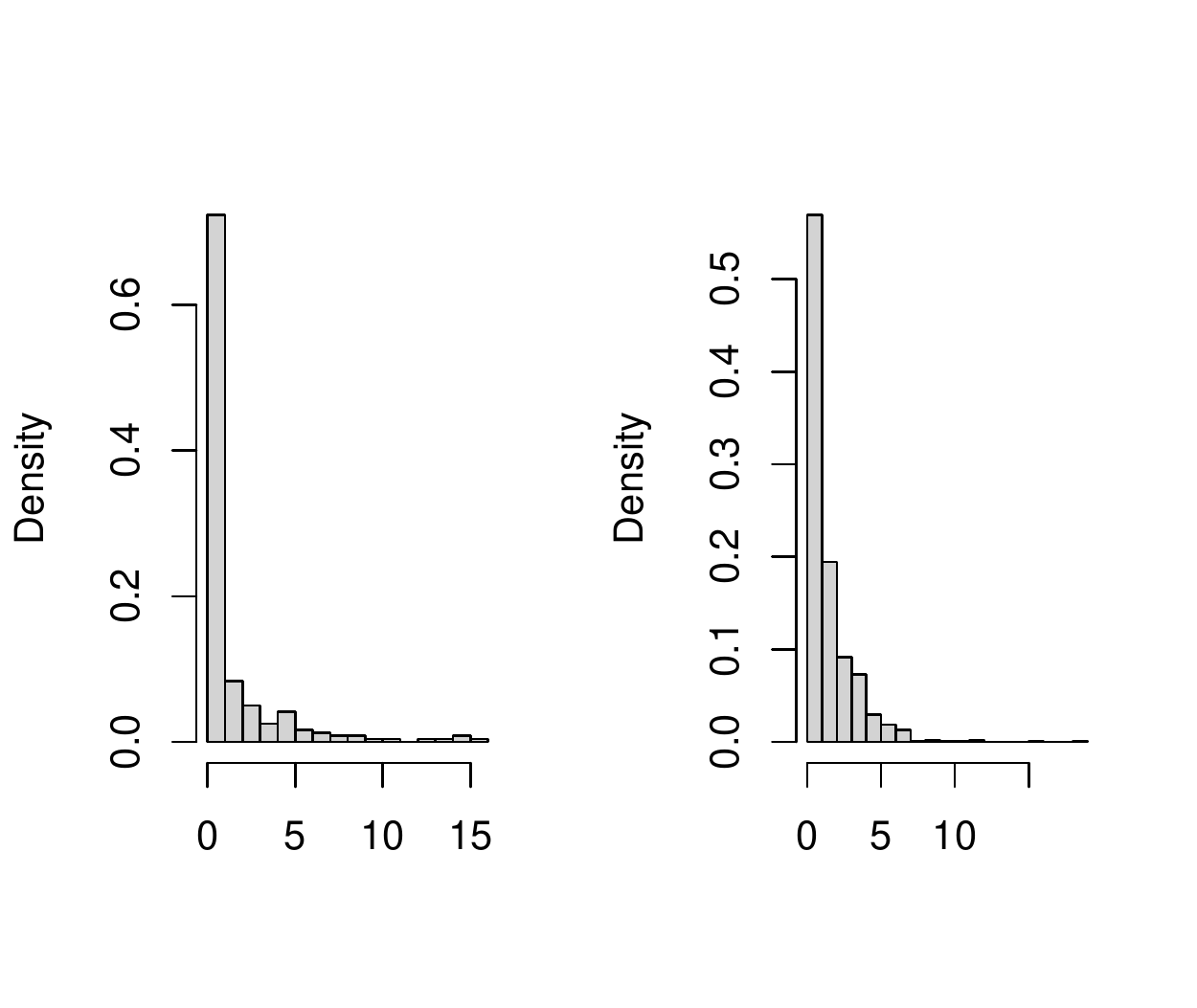}
\caption{(Left) The fish caught count data. (Right)  The count of articles produced  by  graduate students in biochemistry Ph.D. programs.}
\label{fig4b}
\end{figure}

\begin{table}[h!t!b!p!]
\centering
\caption{\emph{Comparison results for the fish count data.} } 
\begin{small}
\begin{tabular*}{5in}{l|lll}
\hline
Model & ML Estimates & Chi-square   & P-value \\
\hline  \\
COM-Poisson & $(\hat \lambda,  \hat \nu )  = (11.5876, 0.9806)$  &   $ 13495000$ & 0 \\ 
Hyper-Poisson & $(\hat \alpha ,\hat \beta) =   (37.1126, 170)$   &   $ 18.2259$ &0.1090  \\
Gen Poisson  & $(\hat \lambda ,\hat \theta) =  (0.6334,  0.5430)$   &   $ 15.1197$ & 0.2349   \\
Model I &$(\hat \alpha ,\hat \beta, \hat \nu) =  (0.9544,  0.2126, 0.0632)$   &   $ 12.6350$   & 0.3178 \\
Model II & $(\hat \alpha ,\hat \beta, \hat \gamma) =  (134.5545, 149.9958, 0.2504)$   &   $\bm{9.7697}$  & $\bm{0.5512}$ \\ \\
\hline
 \end{tabular*}
\end{small}
  \label{t3}
\end{table}

 We have also considered the \texttt{bioChemists} data from the \texttt{pscl} package in R, particularly the count of articles produced  by 915 graduate students in biochemistry Ph.D.\ programs during last 3 years in the program. The data has mean 1.69 and variance of 3.71, and  is showcased in Figure \ref{fig4b} (right panel).   Apparently, Table \ref{t4} suggests that Model II outperforms the rest of the models considered for the article count data. Overall, there is potential in WPD's (e.g.,  Model I and Model II) in flexibly capturing overdispersed and/or underdispersed count data distributions. 

\begin{table}[h!t!b!p!]
\centering
\caption{\emph{Comparison results for the article count data.} } 
\begin{small}
\begin{tabular*}{5in}{l|lll}
\hline
Model & ML Estimates & Chi-square   & P-value \\
\hline  \\
COM-Poisson & $(\hat \lambda,  \hat \nu )  = (14.4428, 0.9903)$  &   $ 14156763$ & 0 \\
Hyper-Poisson & $(\hat \alpha ,\hat \beta) =   (14.5253, 20.3124)$   &   $ 1549.086$ & 1.3487e-34   \\
Gen Poisson  & $(\hat \lambda ,\hat \theta) =  (0.2991,  1.1886)$   &   $ 121.9043$ & 8.0757e-19   \\
Model I &$(\hat \alpha ,\hat \beta, \hat \nu) =  (0.4992,  1.7028, 0.001)$   &   $ 266.8644$   & 9.229e-49 \\
Model II & $(\hat \alpha ,\hat \beta, \hat \gamma) =  (73.17587, 150.001, 1.7985)$   &   $\bm{21.4124}$  & $\bm{0.0915}$ \\ \\
\hline
 \end{tabular*}
\end{small}
  \label{t4}
\end{table}


\subsection*{Acknowledgments}

F.~Polito has been partially supported by the project ``Memory in Evolving Graphs'' (Compagnia di San Paolo/Università degli Studi di Torino).


\begin{thebibliography}{9}

\bibitem[Abramowitz and Stegun (1964)]{as}
Abramowitz M. and Stegun I.A. (1964) Handbook of mathematical functions with formulas, graphs, and mathematical tables. National Bureau of Standards Applied Mathematics Series. 55, pp. 1046.

\bibitem[Crow and Bardwell(1964)]{CB1964}
Bardwell G. E. and Crow E. L. (1964) A two-parameter family of hyper-Poisson distributions. Journal of the American Statistical Association, 59, 133--141.

\bibitem[Beghin and Orsingher(2009)]{bno09}
Beghin L. and  Orsingher E. (2009) Fractional Poisson processes and related planar random motions. Electronic Journal of Probability, 14, 1790--1826.

\bibitem[Beghin and Orsingher(2010)]{bao10}
Beghin L. and Orsingher E. (2010) Poisson-type processes governed by fractional and higher-order recursive differential equations. Electronic Journal of Probability, 15(22), 684--709.

\bibitem[Cahoy et al.(2010)]{cuw10} 
Cahoy D.O., Uchaikin V.V. and Woyczynski W.A. (2010) Parameter estimation for fractional Poisson processes. Journal of Statistical Planning and  Inference, 140(11), 3106--3120.

\bibitem[Chambers et. al(1976)]{cms76}  Chambers J. M.,  Mallows  C. L. and Stuck, B. W. (1976)  A method for simulating stable random variables. Journal of the American Statistical Association, 71(354), 340--344.

\bibitem[Consul and Jain(1973)]{consul}
Consul P. C. and Jain G. C. (1973) A generalization of the Poisson distribution. Technometrics, 15, 791--799.

\bibitem[Conway et al. (1962)]{COMPOISS}
Conway R. W. and Maxwell W. L. (1962) A queuing model with state dependent service rates. Journal of Industrial Engineering, 12, 132--136.

\bibitem[Daley and Narayan(1980)]{daley}
Daley D. J. and Narayan P. (1980) Series expansions of probability generating functions and bounds for the extinction probability of branching process. Journal of Applied Probability, 17(4), 939--947.

\bibitem[De Oliveira et al.(2011)]{deol}
De Oliveira E. C., Mainardi F. and Vaz J. (2011) Models based on Mittag-Leffler functions for anomalous relaxation in dielectrics. The European Physical Journal Special Topics, 193(1), 161--171.

\bibitem[Di Nardo et al.(2006)]{DiNardo}
Di Nardo E. and Senato D. (2006) An umbral setting for cumulants and factorial moments. European Journal of Combinatorics, 27, 394--413.

\bibitem[Garra et al.(1948)]{Garra17}
Garra R., Orsingher E. and Polito F. (2018)
A Note on Hadamard Fractional Differential Equations with Varying Coefficients and Their Applications in Probability. Mathematics, 6(1), 4, 10pp.

\bibitem[Herrmann (2016)]{Herrmann}
Herrmann  R. (2016) Generalization of the fractional Poisson distribution. Fractional Calculus and Applied Analysis, 19(4), 832--842.

\bibitem [Johnson et al.(2005)]{John2005}
Johnson N. L., Kemp A. W. and Kotz S.
(2005) Univariate discrete distributions. III edition. Wiley Series in Probability and Statistics. John Wiley $\&$ Sons.

\bibitem[Jumarie(2001)]{gju01} 
Jumarie G. (2012) Fractional master equation: non-standard analysis and Liouville-Riemann derivative. Chaos, Solutions and Fractals, 12, 2577--2587.

\bibitem[Kanter(1975)]{kan75} 
Kanter M. (1975)  Stable densities under change of scale and total variation inequalities. The Annals of Probability,  3(4), 697--707. 

\bibitem[Kemp(1968)]{kemp1968} 
Kemp A. W. (1968) A wide class of discrete distributions and the associated differential equations. Sankhya (A), 30, 401-410. 

\bibitem[Kemp and Kemp(1974)]{kak74}
Kemp A. and Kemp C.  (1974)   A family of discrete distributions defined via their factorial moments.
Communications in Statistics, 3(12), 1187--1196. 

\bibitem[Kilbas et al.(2006)]{kilbas}
Kilbas A. A., Srivastava H. M. and Trujillo J. J. (2006) Theory and applications of fractional differential equations. North-Holland Mathematics Studies, 204. Elsevier Science B.V., Amsterdam.

\bibitem[Kokonendji et al.(2006)]{koko2006} 
Kokonendji C. C., Miz\`ere D. and Balakrishnan N. (2008) Connections of the Poisson weight function to overdispersion and underdispersion. Journal of Statistical Planning Inference, 138(5), 1287--1296.

\bibitem[Laskin(2003)]{las03}
Laskin N. (2003) Fractional Poisson process. Communications in Nonlinear Science and Numerical Simulation, 8, 201--213.
 
\bibitem[Lebedev(1972)]{leb72} 
Lebedev N. N. (1972) Special functions and their applications, Dover Publications, Inc., New York. 


\bibitem[Maceda(1948)]{macedo}
Maceda E.C . (1948) On the compound and generalized Poisson distributions. Annals of Mathematical Statistics, 19, 414--416.


\bibitem[Mainardi et al.(2010)]{mmp10} Mainardi F., Mura A. and  Pagnini G.  (2010) The {$M$}-{W}right function in time-fractional diffusion
processes: a tutorial survey.  International Journal of Differential Equations, Art. ID 104505, 29 pp. 

\bibitem[Meerschaert et al.(2011)]{mnv11}  Meerschaert M.M.,   Nane E. and  Vellaisamy P. (2011) The fractional Poisson process and the inverse stable subordinator. Electronic Journal of Probability, 16, 1600--1620.

\bibitem[Minka et al.(2003)]{mink}
Minka T.P., Shmueli G., Kadane J.B., Borle S. and Boatwright P. (2003) Computing with the COM-Poisson distribution. Technical Report 775. Department of Statistics, Carnegie Mellon University, Pittsburgh. (Available from \texttt{http://www.stat.cmu.edu/tr/}) 
 
\bibitem[Polito and Tomovski(2016)]{poltom}
Polito F. and Tomovski \v{Z}. (2016) Some properties of Prabhakar-type fractional calculus operators. Fractional Differential Calculus, 6(1), 73--94.
 
\bibitem[Potts(1953)]{Potts}
Potts R. B. (1953) Note on the factorial 
moments of standard distributions. Australian Journal of Physics, 6(4), 498--499.  

\bibitem[Prabhakar(1971)]{pra71} 
Prabhakar T.R. (1971). A singular integral equation with a generalized Mittag-Leffler function in the kernel. Yokohama Mathematical Journal, 19, 7--15.

\bibitem[Rao(1965)]{rao65}
Rao C. R. (1965) On discrete distributions arising out of methods of ascertainment. Sankhy\={a} (A), 27, 311--324.

\bibitem[Saichev and Zaslavsky(1997)]{saz97} Saichev A.I. and Zaslavsky G.M. (1997) Fractional kinetic equations: solutions and applications. Chaos, 7(4), 753--764. 

\bibitem[Shmueli  et al.(2005)]{smkbp05} Shmueli  G., Minka T.P.,   Kadane J.,  Borle S. and  Boatwright, P. (2005)  A useful distribution for fitting discrete data: revival of the Conway-Maxwell-Poisson distribution, Journal of the Royal Statistical Society, Series C (Applied Statistics),  54(1), 127--142.

\bibitem[Repin and Saichev(2000)]{ras00} Repin O.N. and Saichev A.I.  (2000) Fractional Poisson law. Radiophysics and Quantum Electronics, 43(9), 738--741.

\bibitem[Stanley(2012)]{Stanley} 
Stanley R.P. (2012) Enumerative combinatorics. Vol.1, Cambridge Studies in Advanced Mathematics, 49, Cambridge University Press.
 
\bibitem[Pogany and Tomovski  (2016)]{tomovski} 
Pog\'any T.K. and Tomovski Z. (2016) Probability distribution built by Prabhakar function. Related Tur\'an and Laguerre inequalities. Integral Transforms and Special Functions, 27(10), 783--793.

\bibitem[Qi(2010)]{qi}
Qi F. (2010) Bounds for the ratio of two gamma functions. Journal of Inequalities and Applications, Art. ID 493058, 84 pp.

\bibitem[Tripathi and Gurland (1979)]{trip1979} 
Tripathi R. C. and Gurland J. (1979) Some aspects of the Kemp families of distributions. Comm. Statist. A - Theory Methods, 8, 855--869. 

\bibitem[Tricomi and Erdélyi(1951)]{tricomi} 
Tricomi F. G. and Erdélyi A. (1951) The asymptotic expansion of a ratio of gamma functions. Pacific Journal of Mathematics, 1, 133--142.

\end{thebibliography}
\end{document}